 \newtheorem{remark}{Remark}
 \newtheorem{lemma}[remark]{Lemma}
 \newtheorem{theorem}[remark]{Theorem}
 \newtheorem{proposition}[remark]{Proposition}
 \newtheorem{corollary}[remark]{Corollary}
\title{Computing the metric dimension of a graph from primary subgraphs}
\author{D. Kuziak$^{(1)}$, J. A.
Rodr\'{\i}guez-Vel\'{a}zquez$^{(1)}$ and I. G. Yero$^{(2)}$
    \\
    \\
$^{(1)}${\small Departament d'Enginyeria Inform\`atica i Matem\`atiques,}\\
{\small Universitat Rovira i Virgili,}  {\small Av. Pa\"{\i}sos
Catalans 26, 43007 Tarragona, Spain.} \\{\small
dorota.kuziak\@@urv.cat, juanalberto.rodriguez\@@urv.cat}
\\
$^{(2)}${\small Departamento de Matem\'aticas, Escuela Polit\'ecnica Superior de Algeciras}\\
{\small Universidad de C\'adiz,} {\small
Av. Ram\'on Puyol s/n, 11202 Algeciras, Spain.} \\ {\small
ismael.gonzalez\@@uca.es}
\\
}
\date{}
\begin{document}

\maketitle

\begin{abstract}
Let $G$ be  a  connected graph. Given an ordered  set $W = \{w_1, w_2,\dots w_k\}\subseteq V(G)$   and a vertex $u\in V(G)$, the representation of $u$ with respect to $W$ is the ordered $k$-tuple $(d(u,w_1), d(u,w_2),\dots,$ $d(u,w_k))$, where $d(u,w_i)$ denotes the distance between $u$ and $w_i$. The set $W$ is a metric generator for $G$ if every two different vertices of $G$ have distinct representations. A minimum cardinality metric generator is called a \emph{metric
basis} of $G$ and its cardinality is called the \emph{metric dimension} of G. It is well known that the problem of finding the metric dimension of a graph is NP-Hard.  In this paper we obtain  closed formulae for the metric dimension of graphs with cut vertices. The main results are applied to
specific constructions including rooted product graphs, corona product graphs, block graphs and chains of graphs.
\end{abstract}

{\it Keywords:} Metric dimension;  metric basis; primary subgraphs; rooted product graphs; corona product graphs.

{\it AMS Subject Classification Numbers:}  05C12; 05C76.

\section{Introduction}

Graph structures may be used to model computer networks. Servers, hosts or hubs in a network can be represented as vertices in a graph and edges could represent connections between them. Each vertex in a graph is a possible location for an intruder (fault in a computer network, spoiled device) and, this fact motivates the necessity of uniquely recognize each vertex of a graph, \emph{i.e.}, the possible location of an intruder in a network. This necessity gave rise to the notion of locating sets and locating number of graphs, introduced by Slater in \cite{Slater1975,Slater1988}. Harary and Melter \cite{Harary1976} also introduced independently the same concept, but using the terms resolving sets and metric dimension instead of locating sets and locating number, respectively. Moreover, in a more recent article, by Seb\"{o} and Tannier \cite{Sebo2004}, the terminology of metric generators and metric dimension for the concepts mentioned above, began to be used. In this article we follow the terminology and notation of  Seb\"{o} and Tannier \cite{Sebo2004}.

A {\em generator} of a metric space is a set $S$ of points in the space with the property that every point of the space is uniquely determined by its distances from the elements of $S$. Given a simple and connected graph $G$, we consider the metric $d_G:V(G)\times V(G)\rightarrow \mathbb{N}\cup\{0\}$, where $\mathbb{N}$ is the set of positive integers and $d_G(x,y)$ is the length of a shortest path between $x$ and $y$. The pair $(V(G),d_G)$ is readily seen to be a metric space. A vertex $v\in V(G)$ is said to distinguish two vertices $x$ and $y$ if $d_G(v,x)\ne d_G(v,y)$. A set $S\subset V(G)$ is said to be a \emph{metric generator} for $G$ if any pair of vertices of $G$ is distinguished by some element of $S$. A  metric generator $S$ is \textit{minimal}, if no proper subset $S'\subsetneq S$ is a metric generator for $G$. A minimal metric generator of minimum cardinality is called a \emph{metric basis} and its cardinality, the \emph{metric dimension} of $G$, is denoted by $\dim(G)$. Moreover, a minimal metric generator of maximum cardinality is called an \emph{upper metric basis} and its cardinality, the \emph{upper metric dimension} of $G$, is denoted by $\dim^+(G)$. For instance, for complete graphs of order $n$, $\dim^+(K_n)=\dim(K_n)=n-1$; for star graphs of order $r+1\ge 3$, $ \dim^+(K_{1,r})=\dim(K_{1,r})=r-1$; for cycle graphs of order $n$, $\dim^+(C_n)=\dim(C_n)=2$; and for path graphs of order $n\ge 3$, $\dim^+(P_n)=2>\dim(P_n)=1$.  The concepts of upper metric generator and upper metric dimension were introduced first in \cite{Chartrand2000a}.

On the other hand, studies about operations on graphs, particularly products of graphs, are being frequently presented and published in the last few decades. The metric dimension of Cartesian product graphs, lexicographic product graphs, strong product graphs, hierarchical product graphs and corona product graphs was studied in \cite{Caceres2007}, \cite{JanOmo2012,Saputro2013}, \cite{Rodriguez-Velazquez-et-al2014}, \cite{Feng2013} and \cite{Yero2011}, respectively. Furthermore, it was shown in \cite{Garey1979} that the problem of finding the metric dimension of a graph is NP-Hard. This suggests obtaining closed formulae for the metric dimension of special nontrivial families of graphs, or bounding the value of this invariant as tight as possible, or reducing the problem of computing the metric dimension of a graph to that of other simpler parameter. This last possibility regards the case of product graphs or, more general, those graphs obtained throughout some ``operations'' with other graphs, frequently called factor graphs or primary subgraphs.

Consider now a connected graph $G$ constructed from a family of pairwise disjoint (nontrivial) connected graphs $G_1,...,G_k$ in the following way. Select one vertex of $G_1$, one vertex of $G_2$, and identify these two vertices. Afterwards continue this procedure inductively. More precisely, let $G_1,...,G_i$ be already used in the construction, where $i\in\{2,...,k-1\}$. Select one vertex in the already constructed graph (particularly this vertex may be one of the already selected vertices) and one vertex of $G_{i+1}$, and then identify these two vertices. Figure \ref{example G} illustrated a geometrical representation of an example of a graph obtained in this manner.  The concept above was introduced in \cite{Deutsch-Klavzar2013}, where the authors used it to compute the Hosoya polynomials of a graph. Moreover, this construction was used in \cite{Rodriguez-Velazquez-Deutsh2014} to study the terminal Hosoya polynomial of composite graphs and in \cite{Rodriguez-Velazquez-et-al-2014} to compute the local metric dimension of graphs with cut vertices.

We say, as in \cite{Deutsch-Klavzar2013}, that $G$ is obtained by \textit{point-attaching} from $G_1,... ,G_k$ and that $G_i$'s are the \textit{primary subgraphs} of $G$. Furthermore, the vertices of $G$ obtained by identifying two vertices of different primary subgraphs are the \textit{attachment vertices} of $G$. We denote by $A(G)$ the set of attachment vertices of $G$ and by $A(G_i)$  the set of attachment vertices of $G$ belonging to  $V(G_i)$, \textit{i.e.}, $A(G_i)=A(G)\cap V(G_i)$. Observe that any graph constructed by point-attaching from a family of connected graphs has a tree-like structure, where the primary subgraphs are its building stones. Moreover,  for any $x,y\in V(G_i)$ it holds $d_G(x,y)=d_{G_i}(x,y)$.

Examples of graphs obtained by point-attaching are block graphs, cactus graphs, corona product graphs, rooted product graphs, bouquets of graphs,
circuits of graphs, chains of graphs,  etc.

\begin{figure}[h]
\centering
\begin{tikzpicture}
\draw (-6.1,0) ellipse (0.6cm and 1.5cm);
\node at (-6.1,0) {$G_1$};

\filldraw[fill opacity=1,fill=black] (-6.7,0) circle (0.09cm);
\node [left] at (-6.7,0) {\footnotesize $a$};
\draw (-7.7,0) ellipse (1cm and 0.4cm);
\node at (-7.7,0) {$G_2$};

\filldraw[fill opacity=1,fill=black] (-6.6,-1) circle (0.09cm);
\node [right] at (-6.6,-1) {\footnotesize $b$};
\draw[rotate=15] (-7.4,0.7) ellipse (0.8cm and 0.3cm);
\node at (-7.3,-1.2) {$G_3$};

\filldraw[fill opacity=1,fill=black] (-5.5,0) circle (0.09cm);
\node [right] at (-5.5,0) {\footnotesize $c$};
\draw (-4.5,0) ellipse (1cm and 0.7cm);
\node at (-4.5,0) {$G_4$};

\filldraw[fill opacity=1,fill=black] (-4.5,0.7) circle (0.09cm);
\node [above] at (-4.5,0.7) {\footnotesize $f$};
\draw (-4.5,1.7) ellipse (0.3cm and 1cm);
\node at (-4.5,1.7) {$G_7$};

\filldraw[fill opacity=1,fill=black] (-4.5,-0.7) circle (0.09cm);
\node [below] at (-4.5,-0.7) {\footnotesize $g$};
\draw (-4.5,-1.6) ellipse (0.4cm and 0.9cm);
\node at (-4.5,-1.6) {$G_8$};

\filldraw[fill opacity=1,fill=black] (-4.1,-1.6) circle (0.09cm);
\node [right] at (-4.1,-1.6) {\footnotesize $h$};
\draw (-3.1,-1.6) ellipse (1cm and 0.3cm);
\node at (-3.1,-1.6) {$G_{10}$};

\filldraw[fill opacity=1,fill=black] (-3.5,0) circle (0.09cm);
\node [right] at (-3.5,0) {\footnotesize $d$};
\draw (-2,0) ellipse (1.5cm and 0.5cm);
\node at (-2,0) {$G_5$};

\filldraw[fill opacity=1,fill=black] (-2,0.5) circle (0.09cm);
\node [above] at (-2,0.5) {\footnotesize $i$};
\draw (-2,1.4) ellipse (0.4cm and 0.9cm);
\node at (-2,1.4) {$G_{11}$};

\filldraw[fill opacity=1,fill=black] (-0.5,0) circle (0.09cm);
\node [left] at (-0.5,0) {\footnotesize $e$};
\draw[rotate=45] (0.7,0.4) ellipse (1cm and 0.3cm);
\node at (0.2,0.8) {$G_6$};
\draw[rotate=-45] (0.7,-0.4) ellipse (1cm and 0.3cm);
\node at (0.2,-0.7) {$G_9$};

\end{tikzpicture}
\caption{Sketch of a graph $G$ constructed by point-attaching from the primary subgraphs  $G_1,... ,G_{11}$.}
\label{example G}
\end{figure}
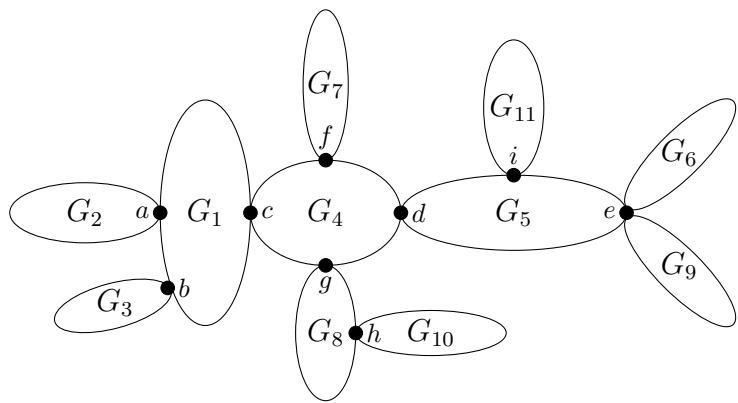

We say that a primary subgraph  $G_i$ is a \emph{primary end-subgraph} whenever   $|A(G_i)|=1$ and it is a   \emph{primary internal subgraph} whenever  $|A(G_i)|\ge 2$. For instance, $G_2$, $G_3$, $G_6$, $G_7$, $G_9$, $G_{10}$ and $G_{11}$ are primary end-subgraphs of the graph $G$ illustrated in Figure \ref{example G}, while $G_1$, $G_4$, $G_5$ and $G_8$ are primary internal subgraphs. In this case, $A(G_1)=\{a,b,c\}$, $A(G_2)=\{a\}$, $A(G_3)=\{b\}$ and so on.
Clearly, any graph obtained by point attaching contains at least two primary end-subgraphs.

In this paper we obtain  closed formulae for the metric dimension of graphs obtained by point-attaching. The main result is applied to
specific constructions including rooted product graphs, corona product graphs, block graphs and chain graphs.  To begin with, we need to introduce some additional notation and terminology. Given a simple graph $G$, the neighbourhood of a vertex $v\in V(G)$ is denoted by $N_G(v)$ and the eccentricity by  $\epsilon_G(v)$. The diameter  of $G$ is denoted $D(G)$, and given a set  $S\subset V(G)$, the subgraph of $G$ induced by $S$ is denoted by  $\langle S\rangle$.  A graph $G$ is $2$-\emph{antipodal} if for each vertex $x\in V(G)$ there exists exactly one vertex $y\in V(G)$ such that $d_G(x,y)=D(G)$. For example even cycles and hypercubes are $2$-antipodal graphs.
For the remainder of the paper, definitions will be introduced whenever a concept is needed.

\section{Main results}

We begin our exposition with a lower bound on the metric dimension of graphs from primary subgraphs in the general case. That is, when there is no rule for the construction of the graphs by point-attaching. Such constructions are of course depending on the attachment vertices of the primary subgraphs and, therefore, relatively complicate to deal with. In this sense, we shall use an extra parameter specifically related to the metric dimension of graphs from primary subgraphs, which we define below.

Let $G$ be a graph obtained by point-attaching  from $G_1,... ,G_k$.
An \emph{attaching metric generator} for a primary subgraph $G_i$ is a set $W\subseteq V(G_i)$ such that $W\cup A(G_i)$ is a metric generator for $G_i$.  A minimum cardinality attaching metric generator is called an \emph{attaching metric basis} and its cardinality, the \emph{attaching metric dimension} of $G_i$, is denoted by $\dim^*(G_i)$. For instance, assume that  $A(G_i)=\{v\}$. If   $v$ does not belong to any metric basis of $G_i$, then $\dim^*(G_i)=\dim(G_i)$ and if $v$ belongs to a metric basis of $G_i$, then
$\dim^*(G_i)=\dim(G_i)-1$. In particular,
for a  path graph or a  cycle graph of order $n$ we have that
$$\dim^*(P_n)=\left\{\begin{array}{ll}
                    1, & \mbox{if $P_n$ has exactly one attachment vertex which has degree $2$;} \\
                    0, & \mbox{otherwise.}
             \end{array} \right.
$$
$$\dim^*(C_n)=\left\{\begin{array}{ll}
                    1, & \mbox{if $C_n$ has exactly one attachment vertex or ($C_n$ has exactly two}\\
                       & \mbox{attachment vertices which are antipodal and $n$ is even);}\\
                    0, & \mbox{otherwise.}
             \end{array} \right.
$$
Furthermore,  for a complete graph we have that  $\dim^*(K_n)=n-|A(K_n)|-1$.

We are now able to state the following lower bound.
\begin{proposition}\label{lem low bound}
For any graph $G$ obtained by point-attaching from a family of connected graphs $G_1, . . . ,G_k$,
$$\dim(G)\ge \sum_{i=1}^k \dim^*(G_i).$$
\end{proposition}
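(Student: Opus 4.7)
The plan is to start from an arbitrary metric basis $B$ of $G$ and exhibit, for each primary subgraph $G_i$, an attaching metric generator of size at most $|B \cap (V(G_i)\setminus A(G_i))|$. Since the vertex sets $V(G_i)\setminus A(G_i)$ are pairwise disjoint (only attachment vertices are shared in the point-attaching construction), summing the resulting inequalities will yield
\[
\dim(G) = |B| \ge \sum_{i=1}^{k} |B \cap (V(G_i)\setminus A(G_i))| \ge \sum_{i=1}^{k} \dim^*(G_i).
\]

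For each $i$, set $B_i := B \cap (V(G_i)\setminus A(G_i))$. The claim to establish is that $B_i \cup A(G_i)$ is a metric generator for $G_i$. Take any two distinct vertices $x,y \in V(G_i)$. Because $B$ is a metric generator for $G$, there exists $w \in B$ with $d_G(w,x)\ne d_G(w,y)$. If $w \in V(G_i)$, then either $w \in A(G_i)$ or $w \in B_i$, and since $d_G$ coincides with $d_{G_i}$ on $V(G_i)\times V(G_i)$, the vertex $w$ already distinguishes $x$ and $y$ inside $G_i$. Otherwise $w$ lies outside $V(G_i)$, and here we invoke the tree-like structure of the point-attaching construction: removing $V(G_i)\setminus A(G_i)$ from $G$ disconnects it into components each of which is attached to $G_i$ through a single attachment vertex. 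Hence there is a unique $v_w \in A(G_i)$ such that every shortest $w$-to-$G_i$ path passes through $v_w$, yielding $d_G(w,z) = d_G(w,v_w) + d_{G_i}(v_w,z)$ for all $z \in V(G_i)$. Subtracting,
\[
d_G(w,x) - d_G(w,y) = d_{G_i}(v_w,x) - d_{G_i}(v_w,y),
\]
so $v_w \in A(G_i)$ distinguishes $x$ and $y$ in $G_i$. Either way, some element of $B_i \cup A(G_i)$ distinguishes the pair, and the claim follows.

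The main obstacle is the clean justification of the ``factoring through an attachment vertex'' identity for $w \notin V(G_i)$. It relies on the inductive, tree-like nature of the point-attaching procedure, which guarantees that any two primary subgraphs are connected through a uniquely determined sequence of attachment vertices. Once that geometric fact is in hand, the remainder of the argument is a straightforward accounting: $|B_i| \ge |B_i \cup A(G_i)| - |A(G_i)| \ge \dim^*(G_i)$ is \emph{not} quite what we need; rather, since $B_i \cup A(G_i)$ is an attaching metric generator for $G_i$ (its union with $A(G_i)$ is itself, which equals $B_i \cup A(G_i)$ and is a metric generator), we directly have $|B_i| \ge \dim^*(G_i)$ by the definition of attaching metric dimension. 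Summing over $i$ completes the proof.
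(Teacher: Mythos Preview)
Your proposal is correct and follows the same route as the paper: intersect a metric basis with each $V(G_i)$, show that together with $A(G_i)$ this resolves $G_i$, and then sum. Your choice $B_i = B\cap(V(G_i)\setminus A(G_i))$ rather than the paper's $M_i = M\cap V(G_i)$ is actually a small improvement—it makes the pieces genuinely disjoint, so $|B|\ge\sum_i|B_i|$ is immediate, whereas the paper's asserted equality $|M|=\sum_i|M_i|$ would overcount any attachment vertices lying in $M$.
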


\begin{proof}
Let $M$ be a metric basis of   $G$ and let $M_i=M\cap V(G_i)$, where $i\in \{1,...,k\}$. We claim that $M_i\cup A(G_i)$ is a metric generator for $G_i$. Let $u$ and $v$ be two different vertices of $G_i$. If $u$ and $v$ are not distinguished by any vertex in $M_i$, then they are distinguished by some vertex $y\in M_j$ for some $j\ne i$.
Let $x\in A(G_i)$ such that $d_G(y,x)=\displaystyle\min_{w\in V(G_i)}\{d_G(y,w)\}$. Hence,
$d_G(u,y)=d_G(u,x)+d_G(x,y)$ and $d_G(v,y)=d_G(v,x)+d_G(x,y)$. Since $d_G(u,y)\ne d_G(v,y)$, we have that $d_G(u,x)\ne d_G(v,x)$. So, $M_i\cup A(G_i)$ is a metric generator for $G_i$ and, as a consequence, $M_i$ is an attaching metric generator for $G_i$. Therefore, $|M_i|\ge \dim^*(G_i)$ and it follows that
$\dim(G)=|M|=\sum_{i=1}^k|M_i|\ge \sum_{i=1}^k \dim^*(G_i)$.
\end{proof}


In order to show that the bond above is tight, we introduce some restrictions on the structure of the graphs obtained from primary subgraphs. Given a graph $G$ constructed by poit-attaching, we define the following properties of a primary subgraph $G_i$.
\\
\\
\noindent \textbf{Property} ${\cal P}_1$: For any $a\in A(G_i)$ and $z\in V(G_i)-A(G_i)$ there exists $b\in A(G_i)$ such that $d_{G_i}(a,b)\ge d_{G_i}(z,b)$.
\\
\\
\noindent \textbf{Property} ${\cal P}_2$:  $A(G_i)=\{v\}$ and either  $ G_i$  is not a path or  $G_i$ is a path  and $v$ is not a leaf.
\\
\\
Notice that property ${\cal P}_1$ is satisfied by a wide family of connected graphs. For instance, when a primary  internal subgraph $G_i$ holds one of the following conditions.
\begin{itemize}
  \item $A(G_i)=V(G_i).$
  \item $D(G_i)=2$ and $A(G_i)$ is any independent set for $G_i$.
  \item $\epsilon_{G_i}(x)=\epsilon_{G_i}(y)=d_{G_i}(x,y)$ for any pair of different vertices $x,y\in A(G_i)$. In particular, complete nontrivial graphs are included in this case.
  \item $G_i$ is $2$-antipodal and $A(G_i)$ is a set such that if $u\in A(G_i)$, then its antipodal vertex also belongs to $A(G_i)$.
\end{itemize}

It was shown in \cite{Chartrand2000} that $\dim(H)=1$ if and only if $H$ is a path. Also, $\{v\}$ is a metric basis of a path graph if and only if $v$ is a leaf.
Hence, if  $G_i$  satisfies ${\cal P}_2$, then  $\dim^*(G_i)\ge 1$ .



\begin{theorem}\label{th equal}
Let $G$ be a graph obtained by point-attaching from a family of connected graphs $G_1, . . . ,G_k$, $k\ge 3$, such that every  primary internal subgraph satisfies  ${\cal P}_1$, every primary end-subgraph satisfies  ${\cal P}_2$, and $A(G_i)\cap A(G_j)=\emptyset$ for any pair $G_i,G_j$ of primary end-subgraphs. Then
$$\dim(G)= \sum_{i=1}^k \dim^*(G_i).$$
\end{theorem}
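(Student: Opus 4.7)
The plan is to establish the upper bound $\dim(G)\le \sum_{i=1}^k \dim^*(G_i)$ by exhibiting an explicit metric generator of this size; the lower bound is Proposition~\ref{lem low bound}. For each $i$, fix an attaching metric basis $W_i$ of $G_i$ with $W_i\cap A(G_i)=\emptyset$ (any attachment vertex in such a basis can be deleted), and set $W=\bigcup_{i=1}^k W_i$, so that $|W|=\sum_i\dim^*(G_i)$. It suffices to verify that $W$ distinguishes every pair $u\ne v$ in $V(G)$, and I proceed by case analysis on whether a single primary subgraph contains both vertices.

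Suppose first $u,v\in V(G_i)$ for some $i$. Since $W_i\cup A(G_i)$ is a metric generator of $G_i$, some $w$ in this set satisfies $d_{G_i}(w,u)\ne d_{G_i}(w,v)$. If $w\in W_i$, we are done because $d_G=d_{G_i}$ on $V(G_i)$. Otherwise $w\in A(G_i)$, hence $w$ is a cut vertex of $G$; the tree structure of the primary subgraphs yields a primary end-subgraph $G_\ell\ne G_i$ in a branch at $w$ opposite $G_i$, and $\mathcal{P}_2$ ensures $W_\ell\ne\emptyset$. Any $y\in W_\ell$ reaches $V(G_i)$ only through $w$, so $d_G(y,x)=d_G(y,w)+d_{G_i}(w,x)$ for $x\in V(G_i)$, and the distinguishing at $w$ carries over to $y$.

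Now assume no primary subgraph contains both $u$ and $v$. If $u\in A(G)$, then since $v$ lies in no subgraph containing $u$, there is a branch at $u$ avoiding $v$; choosing $y\in W_\ell$ in an end-subgraph of that branch (nonempty by $\mathcal{P}_2$) gives $d_G(y,v)=d_G(y,u)+d_G(u,v)>d_G(y,u)$, and symmetrically for $v\in A(G)$. Otherwise $u,v$ are interior to unique primary subgraphs $G_i,G_j$, with the $G_i$-$G_j$ path in the tree running through attachment vertices $a_1,\ldots,a_s$ that separate $u$ and $v$ in $G$. The quantities $d_G(a_t,u)$ strictly increase in $t$ while $d_G(a_t,v)$ strictly decrease, so at most one $a_t$ is equidistant from $u$ and $v$. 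If some $a_t$ is not equidistant, say $d_G(a_t,u)<d_G(a_t,v)$, take $y\in W_\ell$ for an end-subgraph $G_\ell$ on the $u$-side of $a_t$ in the tree (one exists as a leaf of that subtree); the triangle inequality gives $d_G(y,u)\le d_G(y,a_t)+d_G(a_t,u)<d_G(y,a_t)+d_G(a_t,v)=d_G(y,v)$, and the case $d_G(a_t,u)>d_G(a_t,v)$ is symmetric. The remaining configuration is $s=1$ with the unique attachment $a$ equidistant from $u$ and $v$; here the hypothesis $A(G_i)\cap A(G_j)=\emptyset$ for end-subgraphs forces at least one of $G_i,G_j$ to be internal, say $G_i$. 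Applying $\mathcal{P}_1$ with $a$ and $z=u$ yields $b\in A(G_i)\setminus\{a\}$ with $d_{G_i}(u,b)\le d_{G_i}(a,b)$, and taking $y\in W_\ell$ in an end-subgraph in the branch through $b$ (which avoids $v$) gives
\[
d_G(y,u)=d_G(y,b)+d_{G_i}(b,u)\le d_G(y,b)+d_{G_i}(b,a)<d_G(y,b)+d_{G_i}(b,a)+d_G(a,v)=d_G(y,v),
\]
completing the distinguishing.

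The delicate step throughout is locating an appropriate primary end-subgraph: property $\mathcal{P}_2$ is what guarantees $W_\ell\ne\emptyset$ for every such end-subgraph, and the disjointness hypothesis $A(G_i)\cap A(G_j)=\emptyset$ is precisely what prevents the pathological configuration of two end-subgraphs sharing their unique attachment vertex, a case in which no choice of $y\in W$ could distinguish their interior vertices when equidistant from that shared vertex.
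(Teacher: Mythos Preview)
Your proof is correct and follows essentially the same strategy as the paper's: the lower bound comes from Proposition~\ref{lem low bound}, and the upper bound is obtained by showing that $W=\bigcup_i W_i$ (with each $W_i$ an attaching metric basis, automatically disjoint from $A(G_i)$) is a metric generator, using the tree structure to route any needed distinguishing through a primary end-subgraph whose $W_\ell$ is nonempty by~${\cal P}_2$. The only real difference is the organisation of Case~2: the paper splits directly according to whether $|A(G_i)|\ge 2$ or both are end-subgraphs (using ${\cal P}_1$ in the former and a contradiction with $a\ne b$ in the latter), whereas you argue via monotonicity of $d_G(a_t,u)-d_G(a_t,v)$ along the chain of separating attachment vertices and invoke ${\cal P}_1$ only in the residual $s=1$ equidistant situation; both routes are valid and use the hypotheses in the same essential way. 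One small point you assert without proof---that every component of $T-a$ (for $a$ an attachment vertex) contains a primary end-subgraph---is true (each such component contains exactly one $T$-neighbour of $a$, so every other leaf of that component is already a leaf of $T$), but it would not hurt to spell this out.
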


\begin{proof}
By Proposition \ref{lem low bound}, $\dim(G)\ge \sum_{i=1}^k \dim^*(G_i).$ It remains to prove that  $\dim(G)\le \sum_{i=1}^k \dim^*(G_i).$

Let $S_i$ be an attaching metric basis of   $G_i$, $i\in \{1,...,k\}$. We shall show that $S=\bigcup_{i=1}^kS_i$ is a metric generator for $G$.
 To this end, we consider the following cases for two different vertices $x,y\in V(G)$.

\noindent Case 1. $x,y\in V(G_i)$. Since $S_i\cup A(G_i)$  is a metric generator for  $G_i$, there exists  $u\in S_i\cup A(G_i)$ such that $d_{G_i}(x,u)\ne d_{G_i}(y,u)$.  If $u\in S_i$, then we are done. Now, if $u\in A(G_i)$, then there exists a primary end-subgraph $G_j$, $j\ne i$, such that  for any $w\in S_j$, $d_G(u,w)=\displaystyle\min_{v\in V(G_i)}\{d_G(v,w)\}$. Notice that since  $G_j$ satisfies  ${\cal P}_2$, $S_j\ne \emptyset$.
Hence,
$$ d_G(x,w)=d_G(x,u)+d_G(u,w) \ne d_G(y,u)+d_G(u,w) =d_G(y,w).$$

\noindent Case 2. $x\in V(G_i)$ and $y\in V(G_j)$, where $i\ne j$. Let $a\in V(G_i)$ and $b\in V(G_j)$ be the attachment vertices such that $d_G(x,y)=d_G(x,a)+d_G(a,b)+d_G(b,y)$. Note that if $G_i$ and $G_j$ have a common attachment vertex, then $a=b$. If  $y=b=a$ or $x=a=b$, then we proceed as in Case 1, so we assume that $x$ and $y$ do not belong to the same primary subgraph, \textit{i.e.}, $y\ne a$ and $x\ne b$.
\\
\\
\noindent Subcase 2.1. $|A(G_i)|\ge 2$ or $|A(G_j)|\ge 2$. Without loss of generality, we assume  that $|A(G_i)|\ge 2$. Since $G_i$ satisfies ${\cal P}_1$, there exists $c\in A(G_i)-\{a\}$, such that  $d_{G_i}(a,c)\ge d_{G_i}(x,c)$.  Now, let $G_l$, $l\ne i$, be
 a primary end-subgraph  such that  for any $t\in S_l$, $d_G(c,t)=\displaystyle\min_{v\in V(G_i)}\{d_G(v,t)\}$  ($S_l\ne \emptyset$, as $G_l$ satisfies  ${\cal P}_2$). Then  for  any  $t\in S_l$,
$$
d_G(x,t)=d_G(x,c)+d_G(c,t) \le  d_G(a,c)+d_G(c,t)< d_G(y,a)+d_G(a,c)+d_G(c,t) =d_G(y,t).
$$
\noindent Subcase 2.2. $|A(G_i)|=|A(G_j)|=1$. Clearly $G_i$ and $G_j$ are primary end-subgraphs and since they satisfy  ${\cal P}_2$, it follows that $S_i$ and $S_j$ are not empty. Hence, let  $p\in S_i$ and $q\in S_j$. If $x,y$ are distinguished by $p$ or $q$, then we are done. On the contrary, suppose that neither $p$ nor $q$ distinguish the vertices $x$ and $y$. So, we have that
\begin{equation}
d_G(x,p)=d_G(y,p)=d_G(y,b)+d_G(b,a)+d_G(a,p)   \label{i1}
\end{equation}
and
\begin{equation}
d_G(y,q)=d_G(x,q)=d_G(x,a)+d_G(a,b)+d_G(b,q).  \label{i2}
\end{equation}
Observe that since  $A(G_i)\cap A(G_j)=\emptyset$, we have $a\ne b$. Moreover,
\begin{equation}
d_G(x,p)\le d_G(x,a) +d_G(a,p)  \label{i11}
\end{equation}
and
\begin{equation}
d_G(y,q)\le d_G(y,b)+d_G(b,q).  \label{i22}
\end{equation}
From (\ref{i1}) and (\ref{i11}) we obtain
\begin{equation}
d_G(y,b)+d_G(b,a)+d_G(a,p)\le d_G(x,a)+d_G(a,p),  \label{i111}
\end{equation}
and from (\ref{i2}) and (\ref{i22})
\begin{equation}
d_G(x,a)+d_G(a,b)+d_G(b,q)\le d_G(y,b)+d_G(b,q).  \label{i222}
\end{equation}
Finally, by adding (\ref{i111}) and (\ref{i222}) we have the following inequality
\begin{equation}
2\cdot d_G(a,b)\le 0,
\end{equation}
which is a contradiction.

According to the two cases above, $\dim(G)\le \sum_{i=1}^k \dim^*(G_i).$
\end{proof}

The next sections  are devoted to derive some consequences of Theorem \ref{th equal}. That is, we give closed formulae for the metric dimension of some specific families of graphs in terms of some parameters of its primary subgraphs, when the point-attaching process can be described as a graphs composition scheme or when the primary subgraphs satisfy some specific property.

\section{An extremal case}
  As above, let $G$ be a graph obtained by point-attaching  from $G_1,... ,G_k$. In this section we study the case where  every minimal metric generator for a primary subgraph is minimum \textit{i.e}.,  the case where  $\dim(G_i)=\dim^+(G_i)$.   Let ${\cal B}(G_i)$ is the set of metric bases of $G_i$ and let
$$\tau_i=\max_{B_j\in {\cal B}(G_i)} \left\{|A(G_i)\cap B_j| \right\}.$$
That is,  $\tau_i$ quantifies  the maximum number of attachment vertices of $G$ belonging simultaneously to a metric basis of   $G_i$.

\begin{corollary}\label{CorollaryExtremal}
Let $G$ be a graph obtained by point-attaching from a family of connected graphs $G_1, . . . ,G_k$, $k\ge 3$, such that   every  primary internal subgraph satisfies  ${\cal P}_1$, every primary end-subgraph satisfies  ${\cal P}_2$,  for any pair $G_i,G_j$ of primary end-subgraphs $A(G_i)\cap A(G_j)=\emptyset$ and $\dim(G_l)=\dim^+(G_l)$, whenever $A(G_l)\ne V(G_l)$. Then
$$\dim(G)=\sum_{i=1}^k (\dim(G_i)-\tau_i).$$
\end{corollary}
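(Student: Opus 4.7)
The plan is to reduce the Corollary to Theorem \ref{th equal} by establishing the per-subgraph identity
$$\dim^*(G_i) = \dim(G_i) - \tau_i$$
for every primary subgraph $G_i$. Since Theorem \ref{th equal} already yields $\dim(G) = \sum_{i=1}^k \dim^*(G_i)$ under the hypotheses in force, summing these identities gives the claimed formula. Thus the work reduces to two inequalities for a fixed $i$.

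The upper bound $\dim^*(G_i) \le \dim(G_i) - \tau_i$ is the easy direction and does not require the extremal hypothesis. I would choose a metric basis $B_i$ of $G_i$ attaining $|A(G_i) \cap B_i| = \tau_i$, and set $W_i = B_i \setminus A(G_i)$. Then $W_i \cup A(G_i) \supseteq B_i$, and since a superset of a metric generator is a metric generator, $W_i$ is an attaching metric generator for $G_i$, giving $\dim^*(G_i) \le |W_i| = \dim(G_i) - \tau_i$.

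The lower bound is the main obstacle, and this is precisely where the assumption $\dim(G_i) = \dim^+(G_i)$ enters. Let $W$ be an attaching metric basis of $G_i$, so that $W \cup A(G_i)$ is a metric generator for $G_i$. Inside it there is some minimal metric generator $B$. The extremal hypothesis forces every minimal metric generator of $G_i$ to have cardinality exactly $\dim(G_i)$, i.e.\ $B$ is actually a metric basis; consequently $|B \cap A(G_i)| \le \tau_i$ by the very definition of $\tau_i$. Writing $B \subseteq W \cup A(G_i)$ gives $|B| \le |W| + |B \cap A(G_i)| \le |W| + \tau_i$, hence $|W| \ge \dim(G_i) - \tau_i$, as required. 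This is the crucial step: without the coincidence $\dim(G_i) = \dim^+(G_i)$ one cannot control $|B|$, because the minimal generator extracted from $W \cup A(G_i)$ might be strictly larger than $\dim(G_i)$.

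It remains to deal with the degenerate case $A(G_l) = V(G_l)$, for which the hypothesis is not imposed. Here $\emptyset \cup A(G_l) = V(G_l)$ is trivially a metric generator, so $\dim^*(G_l) = 0$; on the other hand every metric basis of $G_l$ is contained in $V(G_l) = A(G_l)$, so $\tau_l = \dim(G_l)$, and the identity reduces to $0 = \dim(G_l) - \dim(G_l)$. Summing $\dim^*(G_i) = \dim(G_i) - \tau_i$ over $i \in \{1, \dots, k\}$ and invoking Theorem \ref{th equal} completes the proof.
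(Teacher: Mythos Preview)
Your proposal is correct and follows the same route as the paper: establish $\dim^*(G_i)=\dim(G_i)-\tau_i$ for each $i$ and then invoke Theorem~\ref{th equal}. The paper merely asserts this identity as ``readily seen'' under the hypothesis $\dim(G_i)=\dim^+(G_i)$, whereas you actually supply both inequalities (including the key extraction of a minimal, hence minimum, metric generator from $W\cup A(G_i)$) and explicitly dispose of the degenerate case $A(G_l)=V(G_l)$; so your argument is a fleshed-out version of the paper's one-line proof.
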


\begin{proof}
It is readily seen that for any primary subgraph $G_i$ of $G$ such that $\dim(G_i)=\dim^+(G_i)$, we have $\dim^*(G_i)=\dim(G_i)-\tau_i$. Therefore, the result is a direct consequence of Theorem \ref{th equal}.
\end{proof}

\begin{figure}[h]
\centering
\begin{tikzpicture}
\draw[black] (-3,0)--(-2,0)--(-1,0)--(0,-1)--(1,0)--(3,0)--(4,-1)--(4,1)--(3,0)--(5,0);
\draw[black] (1,0)--(2,1)--(3,0)--(1,0);
\draw[black] (-1,0)--(0,1)--(1,0);
\draw[black] (-2,-1)--(-2,0)--(-2,1);
\draw[black] (-1,-2)--(0,-1)--(1,-2)--(-1,-2);
\draw[black] (4,-1)--(5,0)--(4,1);
\draw[black] (-1,2)--(0,1)--(1,2)--(-1,2);

\coordinate [label={$a$}] () at (-1,0.1);
\coordinate [label={$c$}] () at (1,0.1);
\coordinate [label={$e$}] () at (3,0.1);
\coordinate [label={$b$}] () at (0,-0.9);
\coordinate [label={$d$}] () at (0,1.1);

\filldraw[fill=white]  (0,-1) circle (0.08cm);
\filldraw[fill=white]   (0,1) circle (0.08cm);
\filldraw[fill=white]   (-1,0) circle (0.08cm);
\filldraw[fill=white]   (1,0) circle (0.08cm);
\filldraw[fill=white]   (3,0) circle (0.08cm);
\filldraw[fill=black]  (2,1) circle (0.08cm);
\filldraw[fill=black]   (4,-1) circle (0.08cm);
\filldraw[fill=black]   (4,1) circle (0.08cm);
\filldraw[fill=black]   (-2,0) circle (0.08cm);
\filldraw[fill=black]   (-2,1) circle (0.08cm);
\filldraw[fill=black]   (-2,-1) circle (0.08cm);
\filldraw[fill=black]   (-3,0) circle (0.08cm);
\filldraw[fill=black]   (-1,-2) circle (0.08cm);
\filldraw[fill=black]   (1,-2) circle (0.08cm);
\filldraw[fill=black]   (-1,2) circle (0.08cm);
\filldraw[fill=black]   (1,2) circle (0.08cm);
\filldraw[fill=black]   (5,0) circle (0.08cm);
\end{tikzpicture}
\caption{A  graph $G$ obtained by point-attaching from  $G_1\cong K_{1,4}$, $G_2\cong C_{4}$, $G_3 \cong G_4\cong G_5\cong K_{3}$ and  $G_6\cong K_{4}$. In this case $A(G_1)=\{a\}$, $A(G_2)=\{a,b,c,d\}$, $A(G_3)=\{b\}$, $A(G_4)=\{c,e\}$, $A(G_5)=\{d\}$ and $A(G_6)=\{e\}$.}\label{FigureEctremal}
\end{figure}
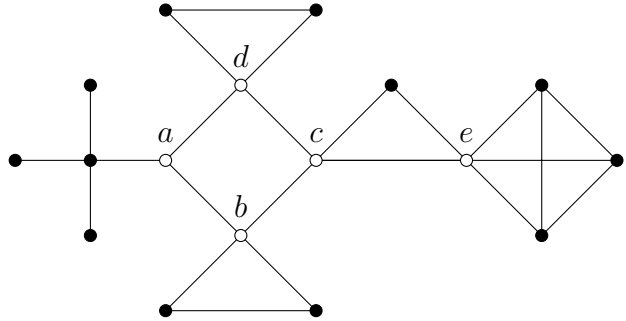

For the  graph $G$ shown in Figure \ref{FigureEctremal}  we have $\tau_1=1$, $\tau_2=2$, $\tau_3=1$, $\tau_4=2$, $\tau_5=1$ and $\tau_6=1$. In this case Corollary \ref{CorollaryExtremal} leads to $\dim(G)=6$.

A \emph{block graph} is a graph in which every biconnected component (block) is a clique. Note that any block graph is obtained by point-attaching from a family of complete graphs. For any complete graph of order $n$, $\dim(K_{n})=n-1=\dim^+(K_{n})$.  Then the following remark is a particular case of Corollary \ref{CorollaryExtremal}.

\begin{remark}
Let $G$ be a block graph obtained  from a family of complete graphs $\{K_{r_1},... ,K_{r_k}\}$, $k\ge 3$, such that any primary end-subgraph is different from $K_2$ and any two primary end-subgraphs have no common attachment vertex. Then
$$\dim(G)=\sum_{|A(K_{r_i})|<r_i} (r_i-|A(K_{r_i})|-1).$$
\end{remark}

\section{Rooted product graphs}

We continue in this section with an interesting particular case of graphs obtained by point-attaching: the rooted product of graphs. We must recall that some results on the metric dimension of rooted product graphs were already presented in \cite{Feng2013}. Nevertheless, several aspects on this topic were remaining from this work and also, the generalized version of rooted product graphs  was not studied. Next we give further results about that.

A \emph{rooted graph} is a graph in which one vertex is labeled in a special way so as to distinguish it from other vertices. The special vertex is called the \emph{root} of the graph. Let $G$ be a labeled graph on $n$ vertices and let ${\cal H}=\{H_1,... ,H_n\}$ be a family of rooted graphs. The \emph{rooted product graph} $G({\cal H})$ is the graph obtained by identifying the root of $H_i$ with the $i^{th}$ vertex of $G$ \cite{Godsil1978}. Clearly, any  rooted product  graph $G[{\cal H}]$  is a graph obtained by point-attaching from the primary internal subgraph  $G$, where $A(G)=V(G)$,  and the family ${\cal H}$  consists of  primary end-subgraphs having its attachment vertices in its roots.  From Theorem \ref{th equal} we deduce our next result.

\begin{corollary}\label{th general}
Let $G$ be a connected graph of order $n\ge 2$ and let ${\cal H}=\{H_1,... ,H_n\}$ be a family composed of  rooted graphs satisfying   ${\cal P}_2$, with roots $v_1,...,v_n$, respectively. Then
$$\dim(G({\cal H}))=\sum_{H_i\in \mathcal{H}_1} \dim(H_i)+\sum_{H_i\in \mathcal{H}_2} (\dim(H_i)-1),$$
where $H_i\in \mathcal{H}_1$ if  $v_i$ does not belong to any metric basis of   $H_i$ and $H_j\in \mathcal{H}_2$ if  $v_j$ belongs to a metric basis of   $H_j$.
\end{corollary}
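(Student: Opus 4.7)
The plan is to obtain this as a direct specialization of Theorem \ref{th equal}. First I would view the rooted product $G(\mathcal{H})$ as a graph built by point-attaching with $k=n+1\ge 3$ primary subgraphs: the labeled graph $G$ itself, whose attachment set is $A(G)=V(G)$, together with the $n$ rooted graphs $H_1,\dots,H_n$, each having attachment set $A(H_i)=\{v_i\}$. Under this description $G$ is the unique primary internal subgraph and the $H_i$'s are exactly the primary end-subgraphs.

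Next I would check that the hypotheses of Theorem \ref{th equal} are satisfied. Property $\mathcal{P}_1$ for $G$ is vacuous because $V(G)-A(G)=\emptyset$, so the universal quantifier over $z$ has no instances. Each $H_i$ satisfies $\mathcal{P}_2$ by hypothesis. Finally, since the roots $v_1,\dots,v_n$ correspond to distinct vertices of $G$, we have $A(H_i)\cap A(H_j)=\{v_i\}\cap\{v_j\}=\emptyset$ for $i\ne j$, so the disjointness condition on attachment sets of primary end-subgraphs holds. Hence Theorem \ref{th equal} applies and gives
\[
\dim(G(\mathcal{H}))=\dim^*(G)+\sum_{i=1}^{n}\dim^*(H_i).
\]

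It then remains to evaluate each attaching metric dimension. For $G$ itself, since $A(G)=V(G)$, the empty set $W=\emptyset$ already makes $W\cup A(G)=V(G)$ a metric generator of $G$, so $\dim^*(G)=0$. For each $H_i$, the single-attachment-vertex computation recorded in Section~2 (immediately before the definition of $\dim^*$ for paths and cycles) yields $\dim^*(H_i)=\dim(H_i)$ when $v_i$ lies in no metric basis of $H_i$, that is when $H_i\in\mathcal{H}_1$, and $\dim^*(H_i)=\dim(H_i)-1$ when $v_i$ belongs to some metric basis, that is when $H_i\in\mathcal{H}_2$. Substituting these values into the sum above yields the claimed formula.

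There is no substantive obstacle here; the corollary is essentially a bookkeeping consequence of Theorem \ref{th equal}. The only mildly delicate points are the vacuous verification of $\mathcal{P}_1$ for $G$ and the evaluation $\dim^*(G)=0$, both of which rely crucially on the fact that in the rooted product construction \emph{every} vertex of $G$ is an attachment vertex.
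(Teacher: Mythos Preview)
Your proof is correct and follows exactly the approach the paper takes: the text preceding the corollary already observes that $G(\mathcal{H})$ is obtained by point-attaching from the primary internal subgraph $G$ with $A(G)=V(G)$ and the primary end-subgraphs $H_1,\dots,H_n$, and then simply states that the result follows from Theorem~\ref{th equal}. You have supplied precisely the details the paper leaves implicit, including the vacuous verification of $\mathcal{P}_1$, the disjointness of the end-subgraph attachment sets, and the evaluation $\dim^*(G)=0$.
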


We consider now the case of a family of vertex transitive graphs ${\cal H}$. Let $Aut(H)$ be the automorphism group of $H$. If $x,y\in V(H)$ and $\pi \in Aut(H)$, then $d(x,y)=d(\pi(x),\pi(y))$. So, if $S$ is a metric basis of   a connected graph $H$ and $\pi \in Aut(H)$, then $\pi(S)$ is a metric basis of   $H$. Thus, every vertex in a vertex transitive graph belongs to a metric basis and by using Corollary \ref{th general} we have the following.

\begin{remark}
Let ${\cal H}=\{H_1,... ,H_n\}$ be a family of   vertex transitive graphs of orders greater than two.
For any connected graph $G$ of order $n\ge 2$,
$$\dim(G({\cal H}))=\sum_{i=1}^n(\dim(H_i)-1).$$
In particular, if  ${\cal H}=\{K_{r_1},... ,K_{r_n}\}$, then $$\dim(G({\cal H}))=\sum_{i=1}^{n}(r_i-2)$$ and if ${\cal H}=\{C_{r_1},... ,C_{r_n}\}$, then $$\dim(G({\cal H}))=n.$$
\end{remark}

A particular case of rooted product graphs is when ${\cal H}$ consists of $n$ isomorphic rooted graphs \cite{Schwenk1974} (this was the case studied in \cite{Feng2013}). More formally, assuming that $V(G) = \{u_1, ..., u_n\}$ and that the root vertex of $H$ is $v$, we define the rooted product graph $G\circ_{v} H$, where $V(G\circ_{v} H)=V(G)\times V(H)$ and
$$E(G\circ_{v} H)=\displaystyle\bigcup_{i=1 }^n\{(u_i,b)(u_i,y): \; by\in E(H)\}\cup \{(u_i,v)(u_j,v):\; u_iu_j\in E(G)\}.$$

Figure \ref{ex rooted} shows two examples of rooted product graphs. We remark that this product was recently renamed as hierarchical product in \cite{Barriere2009}.

\begin{figure}[h]
\centering
\begin{tabular}{cccccc}
\begin{tikzpicture}

\draw(1,1) -- (2,1) -- (3,1) -- (4,1);
\draw(0,2.5) -- (1,3.5) -- (1,1) -- cycle;
\draw(2.3,2) -- (1.5,3) -- (2,1) -- cycle;
\draw(2.7,2) -- (3.5,3) -- (3,1) -- cycle;
\draw(5,2.5) -- (4,3.5) -- (4,1) -- cycle;

\filldraw[fill opacity=0.9,fill=black]  (0,2.5) circle (0.08cm);
\filldraw[fill opacity=0.9,fill=black]  (1,1) circle (0.08cm);
\filldraw[fill opacity=0.9,fill=black]  (1,3.5) circle (0.08cm);
\filldraw[fill opacity=0.9,fill=black]  (1.5,3) circle (0.08cm);
\filldraw[fill opacity=0.9,fill=black]  (2,1) circle (0.08cm);
\filldraw[fill opacity=0.9,fill=black]  (2.3,2) circle (0.08cm);
\filldraw[fill opacity=0.9,fill=black]  (2.7,2) circle (0.08cm);
\filldraw[fill opacity=0.9,fill=black]  (3,1) circle (0.08cm);
\filldraw[fill opacity=0.9,fill=black]  (3.5,3) circle (0.08cm);
\filldraw[fill opacity=0.9,fill=black]  (4,1) circle (0.08cm);
\filldraw[fill opacity=0.9,fill=black]  (4,3.5) circle (0.08cm);
\filldraw[fill opacity=0.9,fill=black]  (5,2.5) circle (0.08cm);

\end{tikzpicture} & & \hspace*{0.7cm} & &
\begin{tikzpicture}
\draw(0,0.8) -- (4,0.8) -- (2,2) -- cycle;
\draw(0,0) -- (0,0.8) -- (0,1.6) -- (0,2.4);
\draw(2,1.2) -- (2,2) -- (2,2.8) -- (2,3.6);
\draw(4,0) -- (4,0.8) -- (4,1.6) -- (4,2.4);

\filldraw[fill opacity=0.9,fill=black]  (0,0) circle (0.08cm);
\filldraw[fill opacity=0.9,fill=black]  (0,0.8) circle (0.08cm);
\filldraw[fill opacity=0.9,fill=black]  (0,1.6) circle (0.08cm);
\filldraw[fill opacity=0.9,fill=black]  (0,2.4) circle (0.08cm);
\filldraw[fill opacity=0.9,fill=black]  (2,1.2) circle (0.08cm);
\filldraw[fill opacity=0.9,fill=black]  (2,2) circle (0.08cm);
\filldraw[fill opacity=0.9,fill=black]  (2,2.8) circle (0.08cm);
\filldraw[fill opacity=0.9,fill=black]  (2,3.6) circle (0.08cm);
\filldraw[fill opacity=0.9,fill=black]  (4,0) circle (0.08cm);
\filldraw[fill opacity=0.9,fill=black]  (4,0.8) circle (0.08cm);
\filldraw[fill opacity=0.9,fill=black]  (4,1.6) circle (0.08cm);
\filldraw[fill opacity=0.9,fill=black]  (4,2.4) circle (0.08cm);

\end{tikzpicture} \\
\end{tabular}
\caption{Rooted products $P_4\circ C_3$ and $C_3\circ_{v} P_4$, where $v$ has degree two.}
\label{ex rooted}
\end{figure}
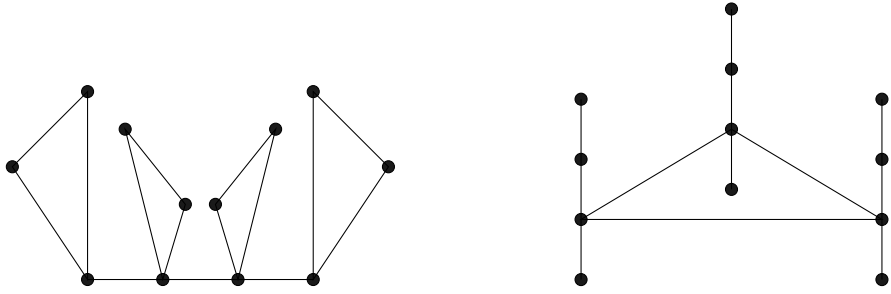

Notice that for the particular case of rooted product graphs $G\circ_v H$, Corollary \ref{th general} becomes the next propositions.

\begin{proposition}{\em \cite{Feng2013}}\label{pro 1}
Let $H$ be a connected graph and let $v$ be a vertex of $H$. If $v$ does not belong to any metric basis of   $H$, then for any connected graph $G$ of order $n$, $$\dim(G\circ_v H)=n\cdot \dim(H).$$
\end{proposition}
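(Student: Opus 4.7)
The plan is to recognize $G\circ_v H$ as a special instance of the rooted product $G({\cal H})$ in which ${\cal H}=\{H_1,\ldots,H_n\}$ consists of $n$ isomorphic copies of $H$, each with root $v_i$ corresponding to $v$, and to derive the claim as a direct application of Corollary \ref{th general}. For $n=1$ the statement is trivial (as $G\circ_v H\cong H$), so I would assume $n\ge 2$, which ensures that the total number of primary subgraphs is $n+1\ge 3$, as required by the underlying Theorem \ref{th equal}.

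I would first verify the hypotheses of Corollary \ref{th general}. The primary internal subgraph is $G$ itself, with $A(G)=V(G)$, which satisfies property ${\cal P}_1$ by the first of the listed sufficient conditions. Each $H_i$ is a primary end-subgraph with $A(H_i)=\{v_i\}$, and for $i\ne j$ the attachment vertices correspond to distinct vertices of $G$, hence $A(H_i)\cap A(H_j)=\emptyset$. The remaining condition is that each $H_i$ satisfies ${\cal P}_2$: either $H$ is not a path, or $H$ is a path and $v$ is not a leaf. The key observation here is that if $H$ were a path and $v$ a leaf, then $\{v\}$ would itself be a metric basis of $H$ (since the metric bases of a path are precisely its leaves), contradicting the hypothesis that $v$ belongs to no metric basis of $H$. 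Hence ${\cal P}_2$ holds in every case.

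I would then apply Corollary \ref{th general}. Since $v$ does not belong to any metric basis of $H$, under the decomposition ${\cal H}={\cal H}_1\cup {\cal H}_2$ in the corollary every $H_i$ lies in ${\cal H}_1$, while ${\cal H}_2=\emptyset$. Consequently,
$$\dim(G\circ_v H)=\dim(G({\cal H}))=\sum_{i=1}^n \dim(H_i)=n\cdot \dim(H),$$
which is the desired equality. There is no substantial obstacle here; the only point requiring a small argument is the exclusion of the degenerate case in which $H$ is a path with $v$ a leaf, and this is immediate from the hypothesis on $v$.
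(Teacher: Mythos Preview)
Your proposal is correct and follows precisely the approach indicated in the paper, which simply remarks that Proposition \ref{pro 1} is a particular case of Corollary \ref{th general}. You have spelled out the verification of ${\cal P}_2$ and the degenerate $n=1$ case more explicitly than the paper does, but the argument is the same.
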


\begin{proposition}{\em \cite{Feng2013}}\label{pro 2}
Let $H$ be a connected graph different from a path and let $v$ be a vertex of $H$. If  $v$  belongs to a metric basis of   $H$, then for any connected graph $G$ of order $n\ge 2$, $$\dim(G\circ_v H)=n\cdot (\dim(H)-1).$$
\end{proposition}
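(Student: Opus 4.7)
The plan is to recognize $G\circ_v H$ as a graph built by point-attaching and to invoke Theorem \ref{th equal} directly. Indeed, take as primary subgraphs the graph $G$ together with $n$ pairwise disjoint copies $H_1,\dots,H_n$ of $H$, where $H_i$ is attached to $G$ by identifying its root with the $i^{th}$ vertex $u_i$ of $G$. In this presentation $G$ plays the role of a primary internal subgraph with $A(G)=V(G)$, while each $H_i$ is a primary end-subgraph with the single attachment vertex $A(H_i)=\{u_i\}$. The total number of primary subgraphs is $k=n+1\ge 3$.

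Next I would verify the hypotheses of Theorem \ref{th equal}. Since $A(G)=V(G)$, the condition $V(G)-A(G)=\emptyset$ makes $\mathcal{P}_1$ vacuously true for $G$. Because $H$ is not a path we have $\dim(H)\ge 2$, and each $H_i$ has exactly one attachment vertex, so $\mathcal{P}_2$ holds for every primary end-subgraph. The attachment vertices of the end-subgraphs are the pairwise distinct vertices $u_1,\dots,u_n$ of $G$, hence $A(H_i)\cap A(H_j)=\emptyset$ whenever $i\ne j$. It remains to compute the attaching metric dimensions. For $G$ the set $A(G)=V(G)$ is trivially a metric generator of $G$, so the empty set is an attaching metric generator and $\dim^*(G)=0$. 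For each copy $H_i$, the assumption that $v$ belongs to some metric basis of $H$ is exactly the condition recorded in the excerpt ensuring $\dim^*(H_i)=\dim(H)-1$.

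Plugging these values into Theorem \ref{th equal} yields
$$\dim(G\circ_v H)=\dim^*(G)+\sum_{i=1}^{n}\dim^*(H_i)=0+n\bigl(\dim(H)-1\bigr),$$
which is the desired formula. There is no real obstacle beyond the bookkeeping: the only point that deserves care is the trivial verification that the primary internal subgraph $G$, with every vertex designated as an attachment vertex, automatically satisfies $\mathcal{P}_1$ and contributes nothing to the sum; once this is observed, the proposition is an immediate corollary of Theorem \ref{th equal} (equivalently, of Corollary \ref{th general} applied with $\mathcal{H}_1=\emptyset$ and $\mathcal{H}_2=\{H_1,\dots,H_n\}$).
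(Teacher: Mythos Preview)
Your proof is correct and follows essentially the same route as the paper: the paper derives Proposition~\ref{pro 2} as the special case of Corollary~\ref{th general} (itself an immediate consequence of Theorem~\ref{th equal}) in which all copies of $H$ land in $\mathcal{H}_2$, and you have simply spelled out the verification of the hypotheses of Theorem~\ref{th equal} that the paper leaves implicit. Your observation that $A(G)=V(G)$ makes $\mathcal{P}_1$ vacuous and forces $\dim^*(G)=0$, together with $\dim^*(H_i)=\dim(H)-1$ when the root lies in a metric basis, is exactly the content the paper packages into Corollary~\ref{th general}.
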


Propositions \ref{pro 1} and \ref{pro 2} give rise to the problem of determining necessary and/or sufficient conditions for a vertex $v\in V(H)$ to belong to a metric basis of   $H$. For instance, it is easy to see that a vertex $v$ of a path $P$ belongs to a metric basis of   $P$ if and only if $v$ is a leaf of $P$. In connection with this fact, by using Proposition \ref{pro 1}, we have the following result.

\begin{corollary}\label{DimEqualto-n}
Let $H$ be a connected graph and let $v\in V(H)$ be a vertex not belonging to any metric basis of   $H$. For any connected graph $G$ of order $n$, $\dim(G\circ_v H)=n$ if and only if $H$ is a path graph and the root of $H$ is not a leaf.
\end{corollary}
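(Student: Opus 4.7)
The plan is to reduce the statement to the two facts already collected in the paper: Proposition \ref{pro 1}, which evaluates the metric dimension of $G\circ_v H$ under the hypothesis that $v$ lies in no metric basis of $H$, and the two classical characterizations of paths used just above the corollary, namely that $\dim(H)=1$ if and only if $H$ is a path, and that on a path the vertices belonging to some metric basis are exactly the leaves.

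First I would apply Proposition \ref{pro 1}: the hypothesis that $v$ does not belong to any metric basis of $H$ is in force, so for any connected graph $G$ of order $n$ we have the identity
\[
\dim(G\circ_v H)=n\cdot \dim(H).
\]
In particular $\dim(G\circ_v H)=n$ holds if and only if $\dim(H)=1$.

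For the forward implication, assume $\dim(G\circ_v H)=n$. The displayed identity forces $\dim(H)=1$, and by the characterization of Chartrand et al.\ cited in the paper this is equivalent to $H$ being a path. Since we also assume that $v$ belongs to no metric basis of $H$, and on a path the metric bases are precisely the two singletons consisting of a leaf, the root $v$ cannot be a leaf of $H$.

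For the reverse implication, assume $H$ is a path and $v$ is not a leaf of $H$. Then $\dim(H)=1$ and, by the same leaf characterization, the single-vertex metric bases of $H$ consist of the leaves only, so $v$ indeed belongs to no metric basis of $H$ (consistently with the standing hypothesis). Substituting $\dim(H)=1$ in the identity from Proposition \ref{pro 1} gives $\dim(G\circ_v H)=n$. There is no substantial obstacle here: the content of the corollary is entirely bookkeeping, assembling the formula of Proposition \ref{pro 1} with the two cited characterizations of paths.
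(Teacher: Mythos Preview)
Your proposal is correct and follows exactly the route the paper indicates: the paper offers no explicit proof beyond the sentence ``by using Proposition \ref{pro 1}, we have the following result,'' and your argument simply spells this out, combining $\dim(G\circ_v H)=n\cdot\dim(H)$ with the two cited facts that $\dim(H)=1$ iff $H$ is a path and that the metric bases of a path are the leaf singletons.
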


We observe that in Proposition \ref{pro 1} the graph $H$ can be a path whenever the root is not a leaf. However in Proposition \ref{pro 2} paths are not allowed. This makes interesting the case of rooted product graphs $G\circ_v H$ when the graph $H$ is a path  and $v$ is a leaf. For that case, the following lower bound is known.

\begin{proposition}{\em \cite{Feng2013}}\label{lem cota1}
Let $P$ be a path graph and let $v$ be a leaf of $P$. For any connected graph $G$ of order $n\ge 2$,
$$\dim(G\circ_v P)\ge \dim(G).$$
\end{proposition}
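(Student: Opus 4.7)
The plan is to take an arbitrary metric basis $W$ of $G\circ_v P$ and extract from it a metric generator for $G$ of cardinality at most $|W|$; this will give $\dim(G) \le |W| = \dim(G\circ_v P)$. Write $V(G) = \{u_1, \ldots, u_n\}$, define the projection $\pi : V(G\circ_v P) \to V(G)$ by $\pi(u_i, b) = u_i$, and set $W' = \pi(W)$. Clearly $|W'| \le |W|$, so all the work lies in showing that $W'$ is a metric generator for $G$.

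To verify this, I would pick two distinct vertices $u_i, u_j \in V(G)$ and lift them to the root-copies $(u_i, v), (u_j, v) \in V(G\circ_v P)$. Since $W$ is a metric basis of $G\circ_v P$, there exists $(u_l, b) \in W$ such that
\[
d_{G\circ_v P}\bigl((u_i, v), (u_l, b)\bigr) \ne d_{G\circ_v P}\bigl((u_j, v), (u_l, b)\bigr).
\]
The pivotal identity I would invoke is
\[
d_{G\circ_v P}\bigl((u_i, v), (u_l, b)\bigr) = d_G(u_i, u_l) + d_P(v, b),
\]
valid for all indices $i, l$ and all $b \in V(P)$ (the case $i = l$ being handled by $d_G(u_i, u_i) = 0$). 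Subtracting $d_P(v, b)$ from both sides of the distinguishing inequality yields $d_G(u_i, u_l) \ne d_G(u_j, u_l)$, so $u_l \in W'$ distinguishes $u_i$ and $u_j$ in $G$.

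The only real step to justify is the distance identity, which is essentially built into the rooted-product construction: the $l$-th copy of $P$ is joined to the rest of $G\circ_v P$ only through its root-copy $(u_l, v)$, so any path from $(u_i, v)$ to $(u_l, b)$ must first reach $(u_l, v)$ along the ``backbone'' of root-copies (at cost $d_G(u_i, u_l)$) and then descend the remaining $d_P(v, b)$ steps inside the $l$-th copy. Notably, this argument does not actually use the hypothesis that $v$ is a leaf or that $P$ is a path; those hypotheses serve only to identify the case left uncovered by Propositions \ref{pro 1} and \ref{pro 2}, in which the sharper equalities are unavailable and only the lower bound remains meaningful.
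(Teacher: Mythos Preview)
The paper does not actually prove this proposition; it is quoted from \cite{Feng2013} as a known result and used without proof. Your argument is correct and is the standard one: the distance identity $d_{G\circ_v P}\bigl((u_i,v),(u_l,b)\bigr)=d_G(u_i,u_l)+d_P(v,b)$ follows because $(u_l,v)$ is a cut vertex separating the $l$-th copy of $P$ (minus its root) from the rest of $G\circ_v P$, and the subgraph on root-copies is an isometric copy of $G$; subtracting the common term $d_P(v,b)$ then shows that the projection $\pi(W)$ resolves $G$. Your closing remark is also correct: nothing in the argument uses that $P$ is a path or that $v$ is a leaf, so the same projection proof gives $\dim(G\circ_v H)\ge \dim(G)$ for any rooted graph $H$.
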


To obtain an upper bound we need some extra terminology and notation.  A \emph{dominating set} for a graph $G$ is a set $S\subseteq V(G)$ such that every vertex not in $S$ is adjacent to at least one member of $S$. The \emph{domination number} of $G$, denoted $\gamma(G)$, is the minimum cardinality of a dominating set. The following well-known upper bound on the domination number of a graph is useful to prove Lemma \ref{lem cota}.

\begin{theorem}{\rm (Ore, 1962)}\label{TheoremOre}
If a graph $G$ of order $n$ has no isolated vertices, then $$\gamma(G)\le \frac{n}{2}.$$
\end{theorem}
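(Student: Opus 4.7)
The plan is to prove the bound by showing that whenever $D$ is a minimum dominating set of $G$, the complementary set $V(G)\setminus D$ is also a dominating set. Once this is established, the inequality $\gamma(G)=|D|\le |V(G)\setminus D|=n-|D|$ follows immediately, which rearranges to $\gamma(G)\le n/2$. So the entire proof reduces to this one structural claim about minimum dominating sets in graphs without isolated vertices.

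To establish that $V(G)\setminus D$ dominates $G$, I would take an arbitrary $v\in D$ and show that $v$ has a neighbor in $V(G)\setminus D$. Suppose instead, toward contradiction, that every neighbor of $v$ lies in $D$. Since $G$ has no isolated vertices, $v$ does have at least one neighbor, say $u\in D$ with $u\ne v$. I then consider $D':=D\setminus\{v\}$ and argue that $D'$ is still a dominating set: the vertex $v$ itself is dominated by $u\in D'$, and any vertex $w\in V(G)\setminus D$ that was previously dominated only by $v$ would have to be a neighbor of $v$, hence in $D$ by our assumption, which is a contradiction. So every $w\in V(G)\setminus D$ is dominated by some vertex of $D'$. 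This makes $D'$ a dominating set of size $|D|-1$, contradicting the minimality of $D$. Therefore $v$ must have a neighbor outside $D$, which is exactly what is needed.

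The main subtlety — which I expect to be the only nontrivial step — is the invocation of the no-isolated-vertices hypothesis at precisely the right point: it guarantees that $v$ has \emph{some} neighbor, which is then forced (under the contradiction hypothesis) to lie in $D$, enabling the removal of $v$ from $D$. Without this hypothesis, an isolated vertex would have to belong to every dominating set, and the argument breaks down. Everything else is bookkeeping about which vertices dominate which.
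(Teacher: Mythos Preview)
Your argument is correct and is essentially the classical proof of Ore's bound: showing that the complement of a minimum dominating set is itself dominating, whence $\gamma(G)\le n-\gamma(G)$. The paper, however, does not give its own proof of this theorem at all; it simply states it as a known result attributed to Ore (1962) and invokes it later in the proof of Proposition~\ref{lem cota}. So there is nothing in the paper to compare your approach against --- your proposal supplies a proof where the paper supplies only a citation.
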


Given $X\subset V(G)$ we denote by $I(X)$ the set of isolated vertices of $\left\langle V(G)-X\right\rangle$. Also, for connected graphs we define  $I(G)=\displaystyle\max_{S\in {\cal B}(G)}\{|I(S)| \},$ where ${\cal B}(G)$ is the set of all the metric basis of   $G$.

\begin{proposition}\label{lem cota}
Let $P$ be a path graph and let $v$ be a leaf of $P$. For any connected graph $G$ of order $n\ge 2$,
$$\dim(G\circ_v P)\le \frac{\dim(G)+n-I(G)}{2}.$$
\end{proposition}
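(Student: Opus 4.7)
The plan is to exhibit an explicit metric generator for $G\circ_v P$ of size at most $(\dim(G)+n-I(G))/2$. Fix a metric basis $B$ of $G$ attaining $|I(B)|=I(G)$ and set $I_0=I(B)$. The preliminary observation is that $\langle V(G)-B-I_0\rangle$ has no isolated vertices: any $w\in V(G)-B-I_0$ has a neighbor in $V(G)-B$ (since $w\notin I_0$), and that neighbor cannot lie in $I_0$, because vertices of $I_0$ are adjacent only to $B$. Hence Theorem~\ref{TheoremOre} supplies a dominating set $D$ of $\langle V(G)-B-I_0\rangle$ with $|D|\le(n-\dim(G)-I(G))/2$. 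Writing $P=p_0p_1\cdots p_k$ with $v=p_0$, my candidate set is
\[
W=\{(s,p_k):s\in B\cup D\},
\]
the far leaves of the pendant copies of $P$ attached at the vertices of $B\cup D$. Since $B\cap D=\emptyset$, $|W|=|B|+|D|\le(\dim(G)+n-I(G))/2$, which is the target bound.

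The core step is to verify that $W$ resolves every pair $(h_1,p_{j_1})\ne(h_2,p_{j_2})$. I would rely on the distance identity
\[
d((s,p_k),(h,p_j))=\begin{cases}k-j,&s=h,\\ k+d_G(s,h)+j,&s\ne h,\end{cases}
\]
whose dependence on $j$ is strictly monotone. The easy cases are two: if $h_1=h_2$, every landmark already separates the pair by monotonicity; and if $h_1\in B\cup D$ (symmetrically $h_2$), then $(h_1,p_k)\in W$ separates the pair because $k-j_1<k+d_G(h_1,h_2)+j_2$.

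The substantial case is $h_1,h_2\notin B\cup D$. By construction each $h_i$ has a neighbor in $B\cup D$: if $h_i\in I_0$ all its neighbors lie in $B$; otherwise $h_i$ is dominated by $D$. I pick such neighbors $s$ of $h_1$ and $s'$ of $h_2$. A short computation equating the two distances through $(s,p_k)$ and through $(s',p_k)$ shows that if \emph{both} landmarks fail then $d_G(s,h_2)=1+j_1-j_2$ and $d_G(s',h_1)=1+j_2-j_1$; summing, $d_G(s,h_2)+d_G(s',h_1)=2$, which forces both distances to equal $1$ and $j_1=j_2$. In that rigid leftover configuration any $s^*\in B$ with $d_G(s^*,h_1)\ne d_G(s^*,h_2)$ (one exists since $B$ resolves $G$ and $h_1\ne h_2$) yields a landmark $(s^*,p_k)\in W$ whose two distances differ because the path contributions $j_i$ cancel.

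The principal obstacle is precisely this third subcase: path coordinates $j_i$ interfere with graph distances, so a metric basis of $G$ does not on its own resolve pairs in $G\circ_v P$, and a single neighbor-landmark in $B\cup D$ may coincidentally fail. The key trick is the \emph{two}-neighbor argument: simultaneous failure of $s$ and $s'$ is so rigid that it forces $j_1=j_2$, at which point $B$ regains its resolving power in $G$. Everything else reduces to routine monotonicity, so the proof hinges entirely on the Ore-based choice of $D$ and this three-landmark synergy between $B$ and $D$.
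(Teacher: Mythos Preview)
Your argument is correct and follows essentially the same route as the paper: both take a metric basis $B$ realizing $I(G)$, apply Ore's theorem to $\langle V(G)-B-I(B)\rangle$ to obtain a small dominating set $D$, and show that the far leaves over $B\cup D$ form a metric generator via a two-neighbor argument. Your write-up is in fact a bit tighter, since you explicitly verify the no-isolated-vertex hypothesis needed for Ore and you collapse the paper's three subcases (depending on whether $h_i\in I(B)$) into a single ``each $h_i$ has a neighbor in $B\cup D$'' step.
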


\begin{proof}
Let $S$ be a metric basis of   $G$ such that $I(G)=|I(S)|$. Let $S'$ be a dominating set for $\left\langle V(G)-(S\cup I(S))\right\rangle$. We show that $(S\cup S')\times \{v'\}$ is a metric generator for $G\circ_v P$, where $v'$ is the leaf of $P$ which is different from $v$. Let $(x,y),(x',y')$ be two different vertices of $G\circ_v P$. We differentiate the following cases.

\noindent Case 1. $y=y'$. In this case $x\ne x'$. So, there exists $u\in S$ such that $d_G(x,u)\ne d_G(x',u)$. If $u=x$ or $u=x'$, say $u=x$, then we clearly have that $d_{G\circ_v P}((x',y'),(u,v'))=d_{G\circ_v P}((x',y'),(x,y))+d_{G\circ_v P}((x,y),(u,v'))>d_{G\circ_v P}((u,v'),(x,y))$. Now, if $u\ne x$ and $u\ne x'$, then
\begin{align*}
d_{G\circ_v P}((u,v'),(x,y))&=d_{H}(v',v)+d_G(u,x)+ d_{H}(v,y)\\
                            &\ne d_{H}(v',v)+d_G(u,x')+ d_{H}(v,y)\\
                             &= d_{H}(v',v)+d_G(u,x')+ d_{H}(v,y')\\
                            &= d_{G\circ_v P}((u,v'),(x',y')).
\end{align*}

\noindent Case 2. $x=x'$. In this case $(a,v')$ resolves the pair $(x,y),(x',y')$, for every $a \in S$.

\noindent Case 3. $x\ne x'$ and $y\ne y'$. Since, the pair $(x,y),(x',y')$ is resolved by $(x,v')$ and also by $(x',v')$, we suppose  $x,x'\not\in S\cup S'$.
With this assumption in mind we consider the following subcases.

\noindent Case 3.1. $x,x'\not \in I(S)$.
In such a case, there exists $a,a'\in S'$ such that $x\in N_G(a)$ and $x'\in N_G(a')$. Now, if $x'\in N_G(a)$, then
\begin{align*}
d_{G\circ_v P}((a,v'),(x,y))&=d_{H}(v',v)+1+ d_{H}(v,y)\\
                            &\ne d_{H}(v',v)+1+ d_{H}(v,y')\\
                            &= d_{G\circ_v P}((a,v'),(x',y')).
\end{align*}
Analogously, if $x\in N_G(a')$, then we deduce that $(a',v')$ resolves the pair $(x,y),(x',y')$. Finally, we suppose that $x\not\in N_G(a')$ and $x'\not\in N_G(a)$. Since $d_G(a,x')\ge2$, $$d_{G\circ_v P}((a,v'),(x,y))=d_{H}(v',v)+1+ d_{H}(v,y)$$ and $$d_{G\circ_v P}((a,v'),(x',y'))=d_{H}(v',v)+d_G(a,x')+ d_{H}(v,y'),$$ we deduce that if $(a,v')$ does not resolve the pair $(x,y),(x',y')$, then $d_H(v,y')<d_H(v,y)$ and, as a consequence,
\begin{align*}
d_{G\circ_v P}((a',v'),(x',y'))&=d_{H}(v',v)+1+ d_{H}(v,y')\\
                            &< d_{H}(v',v)+1+ d_{H}(v,y)\\
                            &< d_{H}(v',v)+d_G(a',x)+ d_{H}(v,y)\\
                            &= d_{G\circ_v P}((a',v'),(x,y)).
\end{align*}

\noindent Case 3.2. $x \in I(S)$ and $x'\not \in I(S)$. In this case there exist $a\in S$ and $a'\in S'$ such that $x\in N_G(a)$ and $x'\in N_G(a')$. Now, if $x'\in N_G(a)$, then we proceed as in Subcase 3.1 and we obtain that $(a,v')$ resolves the pair $(x,y),(x',y')$. Analogously, if $x'\not\in N_G(a)$, then we obtain that either the pair $(x,y),(x',y')$ is resolved by $(a,v')$ or it is resolved by $(a',v')$.

\noindent Case 3.3. $x,x' \in I(S)$. In this case we take $a,a'\in S$ such that $x\in N_G(a)$ and $x'\in N_G(a')$ and we proceed as in Subcase 3.1.

Hence, $(S\cup S')\times \{v'\}$ is a metric generator for $G\circ_v P$. Moreover, by Theorem \ref{TheoremOre} we have $|S'|\le \frac{n-\dim(G)-|I(S)|}{2}$.
Therefore, $$\dim(G\circ_v P)\le |S|+|S'|\le \dim(G)+\frac{n-\dim(G)-|I(S)|}{2}= \frac{\dim(G)+n-I(G)}{2}.$$
\end{proof}

By Propositions \ref{lem cota1} and \ref{lem cota} we obtain the following.

\begin{proposition}\label{coro=dim(G)}
Let $G$ be a connected graph of order $n\ge 2$ and let $v$ be a leaf of a path graph $P$. If $I(G)=n-\dim(G)$, then $\dim(G\circ_v P)=\dim(G).$
\end{proposition}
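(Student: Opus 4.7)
The plan is to observe that this proposition follows immediately by sandwiching $\dim(G\circ_v P)$ between the two bounds already established in Propositions \ref{lem cota1} and \ref{lem cota}. First I would invoke Proposition \ref{lem cota1} to obtain the lower bound $\dim(G\circ_v P)\ge \dim(G)$, which is valid for any connected graph $G$ of order at least $2$ and requires no additional hypothesis.

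Next, I would apply Proposition \ref{lem cota} to get the upper bound $\dim(G\circ_v P)\le \frac{\dim(G)+n-I(G)}{2}$. The single essential computation is then to substitute the hypothesis $I(G)=n-\dim(G)$ into this expression, which collapses it to $\frac{\dim(G)+n-(n-\dim(G))}{2}=\frac{2\dim(G)}{2}=\dim(G)$. The two bounds meet, so equality is forced.

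There is no substantive obstacle to overcome, since all of the heavy combinatorial work — case analysis on the pairs $(x,y),(x',y')$ and the appeal to Ore's theorem — has already been carried out in the proof of Proposition \ref{lem cota}. It is worth noting, however, the meaning of the hypothesis: $I(G)=n-\dim(G)$ is equivalent to the existence of a metric basis $S$ of $G$ such that every vertex of $V(G)-S$ is isolated in $\langle V(G)-S\rangle$, i.e.\ such that $V(G)-S$ is an independent set. Under this condition the auxiliary dominating set $S'$ used in the proof of Proposition \ref{lem cota} may be taken to be empty, which is precisely why the upper bound matches the lower bound from Proposition \ref{lem cota1}.
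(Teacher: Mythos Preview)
Your proposal is correct and matches the paper's approach exactly: the paper simply states that the result follows from Propositions~\ref{lem cota1} and~\ref{lem cota}, which is precisely the sandwich argument you carry out. Your additional remark interpreting the hypothesis $I(G)=n-\dim(G)$ as saying $V(G)-S$ is independent (so that the dominating set $S'$ in the proof of Proposition~\ref{lem cota} may be taken empty) is a nice gloss not present in the paper, but the core argument is the same.
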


The converse of Proposition \ref{coro=dim(G)} is false. For instance, $\dim(C_4\circ_v P)=\dim(C_4)=2$, while $I(C_4)=0$.

Note that $I(K_n)=n-\dim(K_n)=1$. Now we construct a family $\mathcal{F}$ of graphs where $I(G)=n-\dim(G)$, for every $G\in \mathcal{F}$. We begin with the star $S_{1,t}$, $t\ge 3$, of center $v$ and set of leaves $X=\{x_1,x_2,...,x_t\}$. Then to obtain a graph $G_t\in \mathcal{F}$ we add the set of vertices $Y=\{y_1,y_2,...,y_t\}$ and edges $x_iy_j$ for every $i,j\in \{1,...,t\}$ with $i\ne j$. Notice that for every $i,j\in \{1,...,t\}$, $i\ne j$, it follows $d(v,x_i)=1$, $d(v,y_i)=2$, $d(x_i,x_j)=2$, $d(y_i,y_j)=2$, $d(x_i,y_j)=1$ and $d(x_i,y_i)=3$. The graph $G_4$ is showed in Figure \ref{G_4}.

\begin{figure}[h]
\centering
\begin{tikzpicture}
\draw(-1.2,0.8) -- (1.2,-0.8);
\draw(-1.2,-0.8) -- (1.2,0.8);
\draw(1.2,0.8) -- (2.5,1.6);
\draw(1.2,-0.8) -- (2.5,-1.6);
\draw(-1.2,0.8) -- (-2.5,1.6);
\draw(-1.2,-0.8) -- (-2.5,-1.6);

\draw(2.5,1.6) -- (-1.2,0.8);
\draw(2.5,1.6) -- (1.2,-0.8);

\draw(2.5,-1.6) -- (1.2,0.8);
\draw(2.5,-1.6) -- (-1.2,-0.8);

\draw(-2.5,1.6) -- (1.2,0.8);
\draw(-2.5,1.6) -- (-1.2,-0.8);

\draw(-2.5,-1.6) -- (-1.2,0.8);
\draw(-2.5,-1.6) -- (1.2,-0.8);

\filldraw[fill opacity=0.9,fill=black]  (0,0) circle (0.08cm);
\filldraw[fill opacity=0.9,fill=black]  (1.2,0.8) circle (0.08cm);
\filldraw[fill opacity=0.9,fill=black]  (1.2,-0.8) circle (0.08cm);
\filldraw[fill opacity=0.9,fill=black]  (2.5,1.6) circle (0.08cm);
\filldraw[fill opacity=0.9,fill=black]  (2.5,-1.6) circle (0.08cm);
\filldraw[fill opacity=0.9,fill=black]  (-1.2,0.8) circle (0.08cm);
\filldraw[fill opacity=0.9,fill=black]  (-1.2,-0.8) circle (0.08cm);
\filldraw[fill opacity=0.9,fill=black]  (-2.5,1.6) circle (0.08cm);
\filldraw[fill opacity=0.9,fill=black]  (-2.5,-1.6) circle (0.08cm);

\node [below] at (0,0) {$v$};

\node [above] at (-1.2,0.8) {$x_1$};
\node [above] at (1.2,0.8) {$x_2$};
\node [below] at (1.2,-0.8) {$x_3$};
\node [below] at (-1.1,-0.8) {$x_4$};

\node [above right] at (2.5,-1.7) {$y_1$};
\node [above left] at (-2.5,-1.7) {$y_2$};
\node [below left] at (-2.5,1.7) {$y_3$};
\node [below right] at (2.5,1.7) {$y_4$};

\end{tikzpicture}
\caption{The graph $G_4$ satisfies $I(G_4)=n-\dim(G_4)=5$. The set $X=\{x_1,x_2,x_3,x_4\}$ is a metric basis of   $G_4$.}
\label{G_4}
\end{figure}
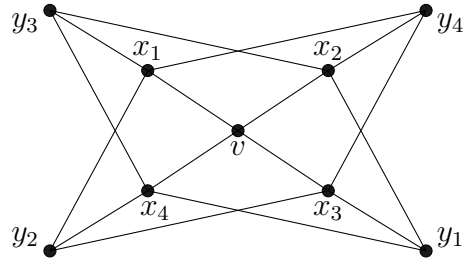

\begin{proposition}
For any graph $G\in {\cal F}$ of order $n$, $I(G)=n-\dim(G)$.
\end{proposition}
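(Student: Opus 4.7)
The plan is to compute $\dim(G_t)$ exactly and to exhibit a metric basis whose complement is an independent set. Throughout let $t\ge 3$ and $n=|V(G_t)|=2t+1$. First I would record the universal bound $I(G)\le n-\dim(G)$ valid for every connected graph: for any metric basis $S$ one has $I(S)\subseteq V(G)-S$, so $|I(S)|\le n-\dim(G)$, and taking the maximum over $\mathcal{B}(G)$ preserves the inequality. Hence the proposition reduces to producing one metric basis $S$ of $G_t$ with $\langle V(G_t)-S\rangle$ edgeless.

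The obvious candidate is $S=X=\{x_1,\ldots,x_t\}$. Using the distance list stated just before the proposition, the representations with respect to $(x_1,\ldots,x_t)$ are $r(v)=(1,1,\ldots,1)$, $r(x_i)$ with a $0$ in coordinate $i$ and $2$ elsewhere, and $r(y_i)$ with a $3$ in coordinate $i$ and $1$ elsewhere. These are pairwise distinct, so $\dim(G_t)\le t$.

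The main step is the matching lower bound $\dim(G_t)\ge t$. A direct check on the distances shows that the pair $\{v,y_i\}$ is distinguished only by vertices in $\{v,x_i,y_i\}$ (for any other vertex $z$ one has $d(z,v)=d(z,y_i)$), while the pair $\{y_i,y_j\}$ with $i\ne j$ is distinguished only by vertices in $\{x_i,x_j,y_i,y_j\}$. Let $W$ be a metric basis. If $v\notin W$, then each pair $\{v,y_i\}$ forces $W\cap\{x_i,y_i\}\ne\emptyset$, and since the $t$ sets $\{x_i,y_i\}$ are pairwise disjoint we obtain $|W|\ge t$. If $v\in W$, then the pairs $\{y_i,y_j\}$ force the index set $J=\{i:W\cap\{x_i,y_i\}\ne\emptyset\}$ to be a vertex cover of the complete graph on $\{1,\ldots,t\}$, so $|J|\ge t-1$ and $|W|\ge 1+(t-1)=t$. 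Thus $\dim(G_t)=t$, and in particular $X$ is a metric basis.

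To close the argument, observe that $V(G_t)-X=\{v\}\cup Y$ spans no edges of $G_t$, since every edge of $G_t$ (of the form $vx_i$ or $x_iy_j$ with $i\ne j$) has an endpoint in $X$. Consequently every vertex of $V(G_t)-X$ is isolated in $\langle V(G_t)-X\rangle$, whence $|I(X)|=t+1=n-\dim(G_t)$. Combined with the universal upper bound this gives $I(G_t)=n-\dim(G_t)$, as required. The only genuinely nontrivial step is the lower bound $\dim(G_t)\ge t$; the rest is direct verification on the distance table.
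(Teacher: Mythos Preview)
Your proof is correct. The upper bound $\dim(G_t)\le t$ and the final observation that $\{v\}\cup Y$ is independent match the paper exactly. Where you diverge is in the lower bound $\dim(G_t)\ge t$: the paper proceeds by an exhaustive six-case analysis on the possible shape of a putative generator $S$ with $|S|<t$ (according to whether $S$ meets $X$, $Y$, or contains $v$), in each case exhibiting a concrete unresolved pair. You instead isolate the two families of critical pairs $\{v,y_i\}$ and $\{y_i,y_j\}$, compute their full resolving sets, and turn the requirement that a basis resolve all of them into a disjoint-sets count (when $v\notin W$) or a vertex-cover condition on $K_t$ (when $v\in W$). Your route is shorter and more structural, replacing ad hoc case-checking by a single combinatorial covering argument; the paper's route is more elementary in that it never needs to characterise the resolving set of a pair completely, only to find one failure in each case.
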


\begin{proof}
With the notation above we show that $X=\{x_1,x_2,...,x_t\}$ is a metric basis of   $G_t$. Since for every $i,j\in \{1,...,t\}$, $i\ne j$, $d(x_i,y_j)=1$, $d(x_i,y_i)=3$ and $d(v,x_i)=1$, we have that $X$ is a metric generator of $G_t$ and, as a consequence, $\dim(G_t)\le t$. Let $S$ be a set of vertices of $G_t$ such that $|S|<t$. We differentiate the following cases.

Case 1: $S\subsetneq X$. Let $x_j\notin S$. Since $d(x_l,v)=d(x_l, y_j)=1$ for  $l\ne j$, we have that $S$ is not a metric generator.

Case 2: $S\subsetneq Y$. Let $y_j\notin S$. Since $d(v,y_l)=d(y_l, y_j)=2$ for $l\ne j$, we have that $S$ is not a metric generator.

Case 3: $S\subsetneq X\cup \{v\}$ and $v\in S$. So, there exist at least two vertices $x_i,x_j\notin S$, $i\ne j$. Notice that $d(x_i, v)=d(x_j, v)=1$ and $d(x_i, x_l)=d(x_j, x_l)=2$ for every $l\ne i,j$. Thus, $S$ is not a metric generator.

Case 4: $S\subsetneq Y\cup \{v\}$ and $v\in S$. So, there exist at least two vertices $y_i,y_j\notin S$, $i\ne j$. Notice that $d(y_i, v)=d(y_j, v)=2$ and $d(y_i, y_l)=d(y_j, y_l)=2$ for every $l\ne i,j$. Thus, $S$ is not a metric generator.

Case 5: $S\cap X\ne \emptyset$, $S\cap Y\ne\emptyset$ and $v\notin S$. Since $|S|<t$, we can assume that there exists $y_j\notin S$ such that also $x_j\notin S$. Hence we have that $d(y_j,x_l)=d(v,x_l)=1$ for every $x_l\in S\cap X$ and $d(y_j,y_k)=d(v,y_k)=2$ for every $y_k\in S$. Thus, $S$ is not a metric generator.

Case 6: $S\cap X\ne \emptyset$, $S\cap Y\ne\emptyset$ and $v\in S$. Since $|S\cap (X\cup Y)|\le t-2$, there exist $i,j\in \{1,...,t\}$, $i\ne j$, such that   $x_i,x_j,y_i,y_j\notin S$. Notice that $d(x_i,v)=d(x_j,v)=1$, $d(x_i,x_l)=d(x_j,x_l)=2$ for every $x_l\in S$ and $d(x_i,y_k)=d(x_j,y_k)=1$ for every $y_k\in S$. Thus, $S$ is not a metric generator.

As a consequence of the cases above, we obtain that there is no metric generator of $G_t$ with cardinality less than $|X|$. Therefore $X$ is a metric base of $G_t$ and, as a consequence, $\dim(G_t)=t$. Finally, since $G_t$ has order $n=2t+1$ and the subgraph induced by $Y\cup \{v\}$ is empty, we obtain $I(G_t)=n-\dim(G_t)=t+1$.
\end{proof}

We continue observing the case when the roots of the paths in a rooted product graph $G\circ_v P$ are leaves, but now we consider when $G$ is a tree. A vertex of degree at least $3$ in a tree $T$ is called a \emph{major vertex} of $T$. Any leaf $u$ of $T$ is said to be a \emph{terminal vertex} of a major vertex $v$ of $T$ if $d_T(u, v)<d_T(u,w)$ for every other major vertex $w$ of $T$. The \emph{terminal degree} of a major vertex $v$ is the number of terminal vertices of $v$. A major vertex $v$ of $T$ is an \emph{exterior major vertex} of $T$ if it has positive terminal degree. Let $n_1(T)$ denotes the number of leaves of $T$, and let $ex(T)$ denotes the number of exterior major vertices of $T$. We can now state the formula for the dimension of a tree \cite{Chartrand2000}.

\begin{theorem}{\em \cite{Chartrand2000}}\label{value-loc-trees}
If $T$ is a tree that is not a path, then $$\dim(T) = n_1(T) - ex(T).$$
\end{theorem}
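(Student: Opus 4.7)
The plan is to establish $\dim(T) = n_1(T) - ex(T)$ by matching lower and upper bounds. For the lower bound, for each exterior major vertex $v$ with terminal vertices $u_1, \ldots, u_{k_v}$, let $L_i$ denote the $v$--$u_i$ path with $v$ removed. I would first show that any metric generator $S$ satisfies $|S \cap (L_1 \cup \cdots \cup L_{k_v})| \geq k_v - 1$: if $S$ missed two entire legs $L_i, L_j$ (say with $d_T(v, u_i) \leq d_T(v, u_j)$), then the unique vertex $y \in L_j$ with $d_T(v, y) = d_T(v, u_i)$ would be indistinguishable from $u_i$ by any $s \notin L_i \cup L_j$, since every such $s$ reaches both $u_i$ and $y$ via a path through $v$, giving $d_T(s, u_i) = d_T(s, v) + d_T(v, u_i) = d_T(s, v) + d_T(v, y) = d_T(s, y)$. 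Because the legs of distinct exterior major vertices are pairwise vertex-disjoint, summing over all exterior major vertices delivers $|S| \geq \sum_v (k_v - 1) = n_1(T) - ex(T)$.

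For the upper bound, I would construct $W$ by selecting, for each exterior major vertex $v$, all but one of its terminal vertices; then $|W| = n_1(T) - ex(T)$. To check that $W$ is a metric generator, fix distinct $x, y \in V(T)$. If $d_T(x, y)$ is odd, then $d_T(s, x) - d_T(s, y)$ has the same parity as $d_T(x, y)$ and is therefore nonzero for every $s$, so any $s \in W$ (and $W$ is nonempty, since $T$ not being a path forces some $k_v \geq 2$) distinguishes $x$ from $y$. If $d_T(x, y)$ is even, let $m$ be the midpoint of $P_{xy}$; a vertex $s$ distinguishes $x$ from $y$ precisely when $s$ lies in the component $C_x$ or $C_y$ of $T \setminus \{m\}$ that contains $x$ or $y$, so the task reduces to showing $W \cap (C_x \cup C_y) \neq \emptyset$.

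The main obstacle is this final assertion. I would argue by contradiction: if $W \cap (C_x \cup C_y) = \emptyset$, every leaf of $T$ lying in $C_x \cup C_y$ must be the omitted terminal of its exterior major vertex. Pick a leaf $\ell \in C_x$ with exterior major vertex $v(\ell)$. If $v(\ell) \in C_x$ and $k_{v(\ell)} \geq 2$, the remaining terminals of $v(\ell)$ lie in $C_x$ (their legs avoid $m$), so one of them belongs to $W \cap C_x$, a contradiction. If $v(\ell) \notin C_x$, then $m$ must lie on the $v(\ell)$--$\ell$ leg, forcing $C_x$ to be a sub-path of that leg whose only leaf of $T$ is $\ell$; the symmetric analysis for $C_y$, combined with the fact that legs of distinct exterior major vertices are pairwise disjoint, then forces $m$ to lie on two disjoint legs simultaneously, which is impossible. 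The residual subcase $v(\ell) \in C_x$ with $k_{v(\ell)} = 1$ is ruled out by a degree argument on $C_x$ viewed as a subtree: it has at most two leaves (the real leaf $\ell$ and possibly the neighbor of $m$ in $C_x$), so $C_x$ is a path on which $v(\ell)$ would have to be an internal vertex yet of $T$-degree at least three, contradicting the structure of paths. This exhausts the cases and completes the upper bound.
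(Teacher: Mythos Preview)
The paper does not give its own proof of this theorem; it is quoted from Chartrand, Eroh, Johnson and Oellermann \cite{Chartrand2000} and used only as a tool to derive Corollary~\ref{Arbol-camino}. So there is no ``paper's proof'' to compare your attempt against.

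Your plan follows the standard route from \cite{Chartrand2000}, and the lower-bound half is correct. In the upper-bound half, however, the residual subcase is not handled: the assertion that ``$C_x$ has at most two leaves (the real leaf $\ell$ and possibly the neighbour of $m$ in $C_x$)'' is unjustified. Nothing you have proved prevents $C_x$ from containing several $T$-leaves, each being the omitted terminal of a \emph{different} exterior major vertex; in that situation $C_x$ is not a path and your degree contradiction does not fire. The same gap contaminates your Case~$v(\ell)\notin C_x$, because the ``symmetric analysis for $C_y$'' may land the $C_y$-leaf in exactly this residual subcase.

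A clean repair is to prove directly that any component $C$ of $T\setminus\{m\}$ with $W\cap C=\emptyset$ contains no major vertex of $T$. Indeed, if $C$ contained one, choose a major vertex $v\in C$ that is a leaf of the subtree of $T$ spanned by $\{m\}$ together with the major vertices in $C$; then all but one of the (at least three) branches of $v$ in $T$ avoid $m$ and contain no major vertex, so at least two of them are legs to terminals of $v$ lying in $C$, and at least one of those terminals belongs to $W$. With this lemma in hand your case split collapses: one always has $v(\ell)\notin C_x$, hence $m$ lies on the $\ell$--$v(\ell)$ path, and distinguishing whether $\deg_T(m)=2$ (then $C_x\cup C_y=V(T)\setminus\{m\}\supseteq W\neq\emptyset$) or $m$ is itself a major vertex (then $C_x$ and $C_y$ are two legs of $m$, and at most one of their end-leaves is omitted) finishes the argument.
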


If $v$ is a leaf of a path $P$ and $T$ is a tree of order $n\ge 3$, then $T\circ_v P$ is a tree, $n_1(T\circ_v P)=n$ and $ex(T\circ_v P)=n-n_1(T)$. Hence, as a consequence of Theorem \ref{value-loc-trees} we deduce the following result.

\begin{corollary}\label{Arbol-camino}
Let $P$ be a path graph and let $v$ be a leaf of $P$. For any tree $T$ of order $n\ge 3$,
$$\dim(T\circ_v P)=  n_1(T).$$
\end{corollary}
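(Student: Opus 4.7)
The plan is a direct application of Theorem~\ref{value-loc-trees} to the tree $T\circ_v P$, after verifying the three structural facts flagged immediately before the corollary. I would first observe that $T\circ_v P$ is obtained from trees via a sequence of single-vertex identifications, so it is itself a tree, and that it is not a path: any tree of order $n\ge 3$ contains at least one vertex $u$ with $\deg_T(u)\ge 2$, and in the product the attachment vertex corresponding to $u$ picks up at least one additional neighbour from the attached copy of $P$ (since $P$ has order at least $2$, as $v$ is a leaf of $P$), giving it degree $\ge 3$.

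Next I would count leaves and exterior major vertices. Writing $P$ with leaves $v$ and $v'$, every vertex of the product that is not of the form $(u,v)$ lies on an attached pendant copy of $P$ and has degree equal to its degree in that copy; the only vertex of degree $1$ on such a copy is $(u,v')$. Meanwhile each $(u,v)$ has degree $\deg_T(u)+1\ge 2$ in the product. Hence the $n$ vertices $(u,v')$ are exactly the leaves of $T\circ_v P$, so $n_1(T\circ_v P)=n$. Similarly, a vertex has degree $\ge 3$ if and only if it has the form $(u,v)$ with $\deg_T(u)\ge 2$, i.e.\ if and only if $u$ is an internal vertex of $T$, giving $n-n_1(T)$ major vertices. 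Each such $(u,v)$ has the leaf $(u,v')$ at the far end of its own pendant path, reached without passing through any other major vertex, so it is an exterior major vertex with terminal degree at least one. Hence $ex(T\circ_v P)=n-n_1(T)$.

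Plugging these values into Theorem~\ref{value-loc-trees} then yields
\[\dim(T\circ_v P)=n_1(T\circ_v P)-ex(T\circ_v P)=n-\bigl(n-n_1(T)\bigr)=n_1(T),\]
as required. The argument is entirely structural and there is no real obstacle; the only point requiring care is the verification that $T\circ_v P$ is not itself a path, which is precisely what the hypothesis $n\ge 3$ (together with $v$ being a leaf of $P$) guarantees, since under these assumptions some vertex of $T$ of degree $\ge 2$ acquires an additional neighbour along its attached pendant copy of $P$.
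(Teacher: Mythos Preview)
Your proposal is correct and takes essentially the same approach as the paper: the paper's argument is the single sentence preceding the corollary, asserting that $T\circ_v P$ is a tree with $n_1(T\circ_v P)=n$ and $ex(T\circ_v P)=n-n_1(T)$, and then invoking Theorem~\ref{value-loc-trees}. You have simply filled in the verifications of these three facts (and the check that $T\circ_v P$ is not a path) in more detail than the paper does.
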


The inequalities of Propositions \ref{lem cota1} and \ref{lem cota} lead to the following problem.   Given a path $P$ and a leaf $v$ of $P$, is there a graph $G$ of order $n$ such that $\dim(G)=a$ and $\dim(G\circ_v P)=b$, for every integers $a,b,n$ with $2\le a< b\le \frac{a+n}{2}$?

In order to give an answer to the question above, we construct a tree $T(a,b,n)$ in the following way. Let $S_{1,a}$ be a star graph with $a$ leaves and let $P'$ be a path graph of order $n-b+1$. To obtain $T(a,b,n)$ we proceed as follows.
\begin{itemize}
\item Identify one leaf of $P'$ with the center of the star $S_{1,a}$.
\item Add one pendant vertex to $b-a-1$ vertices of degree two of the path $P'$.
\end{itemize}
Since $P'$ has $n-b-1$ vertices of degree two, we have that $b-a-1\le n-b-1$. Thus, $b\le \frac{a+n}{2}$. Also, $n_1(T(a,b,n))=b$ and $ex(T(a,b,n)=b-a$. Thus, Theorem \ref{value-loc-trees} leads to $\dim(T(a,b,n))=a$ and, if $v$ is a leaf of a path graph  $P$, Corollary \ref{Arbol-camino} leads to $\dim(T(a,b,n)\circ_v P)=b$, which gives answer to the question mentioned above.

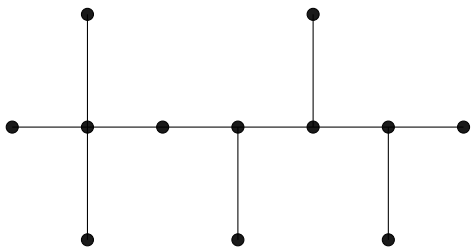
\begin{figure}[h]
\centering
\begin{tikzpicture}
\draw(0,0) -- (6,0);
\draw(1,0) -- (1,1.5);
\draw(1,0) -- (1,-1.5);
\draw(3,0) -- (3,-1.5);
\draw(4,0) -- (4,1.5);
\draw(5,0) -- (5,-1.5);

\filldraw[fill opacity=0.9,fill=black]  (0,0) circle (0.08cm);
\filldraw[fill opacity=0.9,fill=black]  (1,0) circle (0.08cm);
\filldraw[fill opacity=0.9,fill=black]  (2,0) circle (0.08cm);
\filldraw[fill opacity=0.9,fill=black]  (3,0) circle (0.08cm);
\filldraw[fill opacity=0.9,fill=black]  (4,0) circle (0.08cm);
\filldraw[fill opacity=0.9,fill=black]  (5,0) circle (0.08cm);
\filldraw[fill opacity=0.9,fill=black]  (6,0) circle (0.08cm);

\filldraw[fill opacity=0.9,fill=black]  (1,1.5) circle (0.08cm);
\filldraw[fill opacity=0.9,fill=black]  (1,-1.5) circle (0.08cm);
\filldraw[fill opacity=0.9,fill=black]  (3,-1.5) circle (0.08cm);
\filldraw[fill opacity=0.9,fill=black]  (4,1.5) circle (0.08cm);
\filldraw[fill opacity=0.9,fill=black]  (5,-1.5) circle (0.08cm);

\end{tikzpicture}
\caption{A tree $T(3,7,12)$.}
\label{H(3,7,12)}
\end{figure}

\begin{proposition}\label{rastreo}
Let $P$ be a path graph and let $v$ be a leaf of $P$. For any integer $a,b,n$ with $2\le a< b\le \frac{a+n}{2}$, there exists a graph $G$ of order $n$ such that $\dim(G)=a$ and $\dim(G\circ_v P)=b$.
\end{proposition}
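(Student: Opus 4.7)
My plan is to exhibit the tree $T(a,b,n)$ already defined in the paragraph preceding the statement and to verify that it has order $n$, metric dimension $a$, and that its rooted product with $P$ has metric dimension $b$.

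First, I would check that the construction is well-defined and produces a graph of order $n$. The star $S_{1,a}$ has $a+1$ vertices and the path $P'$ has $n-b+1$ vertices; identifying one leaf of $P'$ with the center of $S_{1,a}$ yields a tree of order $a+n-b+1$, after which the attachment of $b-a-1$ pendants gives exactly $n$ vertices. Feasibility requires that $P'$ has at least $b-a-1$ degree-two vertices, that is $b-a-1\le n-b-1$, which is precisely the hypothesis $b\le (a+n)/2$.

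Next, I would compute $\dim(T(a,b,n))$ by applying Theorem \ref{value-loc-trees}, for which I only need to count leaves and exterior major vertices. The leaves consist of the $a$ original star-leaves, the $b-a-1$ added pendants, and the surviving endpoint of $P'$, giving $n_1(T(a,b,n))=b$. Since $a\ge 2$, the former center of the star becomes a major vertex of degree $a+1\ge 3$ in $T(a,b,n)$, so the tree is not a path, and the exterior major vertices are precisely this vertex together with the $b-a-1$ degree-three vertices on $P'$ that received a pendant, giving $ex(T(a,b,n))=b-a$. Theorem \ref{value-loc-trees} then produces $\dim(T(a,b,n))=b-(b-a)=a$.

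Finally, I would invoke Corollary \ref{Arbol-camino}. Because $T(a,b,n)$ is a tree of order $n\ge 2b-a\ge a+2\ge 4$, the corollary gives $\dim(T(a,b,n)\circ_v P)=n_1(T(a,b,n))=b$. Choosing $G=T(a,b,n)$ thus yields the required example. No substantive obstacle is anticipated; the argument reduces to verifying the two counts $n_1=b$ and $ex=b-a$ and then applying the two tree formulas already established.
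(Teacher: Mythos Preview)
Your proposal is correct and follows essentially the same approach as the paper: the paper's argument, given in the paragraph immediately preceding the proposition, constructs $T(a,b,n)$, notes that $n_1(T(a,b,n))=b$ and $ex(T(a,b,n))=b-a$, and then applies Theorem~\ref{value-loc-trees} and Corollary~\ref{Arbol-camino} exactly as you do. Your write-up is in fact slightly more detailed than the paper's (you verify the order count, the feasibility inequality, that the tree is not a path, and that $n\ge 3$), but the underlying idea is identical.
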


\section{Corona product graphs}

We consider now an interesting construction, which can be understood as a rooted product graph and, consequently, as a graph obtained by using the point-attaching process.  The \emph{corona product graph} $G\odot {\cal H}$ is defined as the graph obtained from a graph $G$ of order $n$ and a family of graphs ${\cal H}=\{H_1,H_2,\ldots, H_n\}$ by adding an edge between each vertex of $H_i$ and the $i^{th}$-vertex of $G$, \cite{Frucht1970}. Hence, $G\odot {\cal H}$
is a rooted product graph $G(K_1+{\cal H})$ where $K_1+{\cal H}=\{K_1+H_1,K_1+H_2,...,K_1+H_n\}$ and $K_1+H_i$ is the join graph obtained from $K_1$ and $H_i$.
By Corollary \ref{th general} we deduce the following result.

\begin{remark}\label{MeinResultCorona}
Let $G$ be a connected graph of order $n\ge 2$ and let ${\cal H}=\{H_1,H_2,\ldots, H_n\}$ be a family of  nontrivial graphs. Then
$$\dim(G\odot {\cal H})=\sum_{H_i\in \mathcal{H}_1} \dim(K_1+H_i)+\sum_{H_i\in \mathcal{H}_2} (\dim(K_1+H_i)-1),$$
where $H_i\in \mathcal{H}_1$ if the vertex of $K_1$ does not belong to any metric basis of   $K_1+H_i$ and $H_j\in \mathcal{H}_2$  if the vertex of $K_1$ belongs to a metric basis of   $K_1+H_j$.
\end{remark}

The metric dimension of corona product graphs $G\odot {\cal H}$,  where ${\cal H}$ consists of $n$ graphs isomorphic   to a given graph $H$, was studied in  \cite{Rodriguez-Velazquez-Fernau2013,Fernau-Ja-Corona-2014,Iswadi2011,Yero2011}. In this case we use the notation $G\odot {  H}$ instead of $G\odot {\cal H}$.

We would emphasize the following particular case of the result above,  which improve some results obtained in \cite{Yero2011} and corrects a result\footnote{Corollary \ref{CorollaryResultCorrect} corrects Theorem 1 of \cite{Iswadi2011}, which states that if $H$ does not have dominating vertices, then $\dim(G\odot {H})=n\cdot \dim(K_1+H)$. A counterexample is shown in Figure \ref{FigureCounterexample}} stated in \cite{Iswadi2011}.

\begin{corollary}\label{CorollaryResultCorrect}
Let $G$ be a connected graph of order $n\ge 2$ and let $H$ be a  nontrivial graph.
 Then
$$\dim(G\odot {H})=
\left\{ \begin{array}{ll}
n\cdot ( \dim(K_1+H)-1),\; \mbox{\rm if the vertex of }\; K_1 \;\mbox{\rm belongs to  a metric}\\
\hspace{4.3cm} \mbox{ basis of } \; K_1+H;  \\
\\
n\cdot \dim(K_1+H), \; \mbox{\rm otherwise.}
\end{array} \right.
$$
\end{corollary}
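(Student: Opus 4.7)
The plan is to obtain this corollary as a direct specialization of Remark~\ref{MeinResultCorona} to the case where every member of the family $\mathcal{H}$ equals the same graph $H$. Since all the $H_i$ are isomorphic to $H$, the join graphs $K_1 + H_i$ are all isomorphic (via an isomorphism fixing the apex vertex of $K_1$), and so the property ``the vertex of $K_1$ belongs to a metric basis of $K_1 + H_i$'' either holds for every $i$ or fails for every $i$. In other words, the partition $\mathcal{H} = \mathcal{H}_1 \cup \mathcal{H}_2$ from Remark~\ref{MeinResultCorona} is trivial: either $\mathcal{H}_1 = \mathcal{H}$ and $\mathcal{H}_2 = \emptyset$, or vice versa.

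Accordingly, I would split the proof into two cases. If the vertex of $K_1$ does not belong to any metric basis of $K_1+H$, then $H_i \in \mathcal{H}_1$ for all $i$, and Remark~\ref{MeinResultCorona} gives
\[
\dim(G \odot H) \;=\; \sum_{i=1}^n \dim(K_1 + H) \;=\; n \cdot \dim(K_1 + H).
\]
On the other hand, if the vertex of $K_1$ does belong to some metric basis of $K_1+H$, then by transporting this basis through the isomorphism we see the same holds for every $K_1+H_i$, so $H_i \in \mathcal{H}_2$ for all $i$, and Remark~\ref{MeinResultCorona} gives
\[
\dim(G \odot H) \;=\; \sum_{i=1}^n \bigl(\dim(K_1 + H) - 1\bigr) \;=\; n\bigl(\dim(K_1 + H) - 1\bigr).
\]

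Since the argument is essentially just a specialization, there is no real obstacle; the only point worth stating carefully is the observation that all the graphs $K_1 + H_i$ share the same metric-basis behaviour with respect to the apex vertex because they are isomorphic via a map fixing that apex. Consequently, no mixing of $\mathcal{H}_1$ and $\mathcal{H}_2$ can occur, which is precisely what collapses the sum in Remark~\ref{MeinResultCorona} into the single product $n \cdot \dim(K_1+H)$ or $n \cdot (\dim(K_1+H) - 1)$.
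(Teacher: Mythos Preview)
Your proposal is correct and follows exactly the paper's approach: the paper presents Corollary~\ref{CorollaryResultCorrect} simply as ``the following particular case of the result above,'' i.e., the specialization of Remark~\ref{MeinResultCorona} to the situation where every $H_i$ equals $H$. Your observation that the isomorphism fixing the apex forces either $\mathcal{H}_1=\mathcal{H}$ or $\mathcal{H}_2=\mathcal{H}$ is precisely what makes the specialization immediate.
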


\begin{figure}[h]
\centering
\begin{tabular}{cccccc}
\begin{tikzpicture}
\draw(0,0) -- (0.8,0.8) -- (2,0.8) -- (2.8,0) -- (2,-0.8) -- (0.8,-0.8) -- cycle;
\draw(0.8,0.8)--(0.8,-0.8); \draw(2,0.8)--(2,-0.8);
\filldraw[fill opacity=0.9,fill=black]  (0,0) circle (0.08cm);
\filldraw[fill opacity=0.9,fill=white]  (0.8,0.8) circle (0.08cm);
\filldraw[fill opacity=0.9,fill=black]  (2,0.8) circle (0.08cm);
\filldraw[fill opacity=0.9,fill=black]  (2.8,0) circle (0.08cm);
\filldraw[fill opacity=0.9,fill=black]  (2,-0.8) circle (0.08cm);
\filldraw[fill opacity=0.9,fill=white]  (0.8,-0.8) circle (0.08cm);
\coordinate [label={$a$}] () at (0.8,0.81);
\coordinate [label={$b$}] () at (0.8,-1.4);
\end{tikzpicture}

\end{tabular}
\caption{A graph $H$ where $\dim(K_1+H)=3$. A metric basis of $K_1+H$ is $\{v,a,b\}$, where $v$ is the vertex of $K_1$.}
\label{FigureCounterexample}
\end{figure}
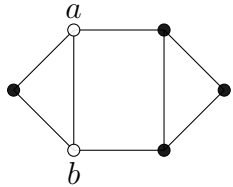

For instance,  for the graph $H$ shown in Figure \ref{FigureCounterexample} we have $\dim(K_1+H)=3$. A metric basis of $K_1+H$ is $\{v,a,b\}$, where $v$ is the vertex of $K_1$. Therefore, Corollary \ref{CorollaryResultCorrect} leads to  $\dim(G\odot {H})=2n$, for any graph $G$ of order $n\ge 2$.

Now, according to Remark \ref{MeinResultCorona}, a significant problem consists of determining necessary and/or sufficient conditions for the vertex of $K_1$ to belong to a metric basis of  $K_1+H$. For instance,  it was shown in \cite{Yero2011}
that if $H$ is a graph of diameter $D(H)\ge 6$ or it is a cycle graph of order greater than $6$, then the vertex of $K_1$ does not belong to any metric basis of $K_1+H$ and so $\dim(G\odot H)= n\cdot \dim(K_1+H)$. In this direction we state  the following result.

\begin{lemma}\label{Lemmak1belons}
Let $H$ be a graph of radius $r(H)$ and maximum degree $\Delta(H)$. If $r(H)\ge 4$ or $\dim(K_1+H)>\Delta(H)+1$, then the vertex of $K_1$ does not belong to any metric basis of   $K_1+H.$
\end{lemma}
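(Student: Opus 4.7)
The plan is to assume for contradiction that the apex $v$ of $K_1$ lies in some metric basis $B$ of $K_1+H$, set $W=B\setminus\{v\}$, and extract enough structure on $W$ to contradict each hypothesis separately.

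The pivotal observation is that $v$ is useless for resolving pairs inside $V(H)$: every vertex of $V(H)$ is at distance $1$ from $v$, and any two vertices $x,y\in V(H)$ sit at distance $\min\{d_H(x,y),2\}$ in $K_1+H$. Since $B$ is a minimum metric generator, $W$ must fail to resolve some pair, and the previous remark forces that unresolved pair to have the form $(v,q)$ with $q\in V(H)$. The equation $d_{K_1+H}(w,v)=d_{K_1+H}(w,q)$ for every $w\in W$ then reduces to $d_{K_1+H}(w,q)=1$, i.e., $w\in N_H(q)$ and $w\ne q$. Hence $W\subseteq N_H(q)$, and this inclusion is the engine of the proof.

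From $W\subseteq N_H(q)$ the degree hypothesis is immediate: $\dim(K_1+H)-1=|W|\le |N_H(q)|\le \Delta(H)$, so $\dim(K_1+H)\le \Delta(H)+1$, contradicting the assumption $\dim(K_1+H)>\Delta(H)+1$.

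For the radius hypothesis, I would use $r(H)\ge 4$ to exhibit a pair that $B$ cannot distinguish. Since $\epsilon_H(q)\ge r(H)\ge 4$, there is a shortest path $q=z_0,z_1,z_2,z_3,z_4$ in $H$; in particular $d_H(q,z_3)=3$ and $d_H(q,z_4)\ge 4$. For every $w\in W\subseteq N_H(q)$ the triangle inequality yields $d_H(w,z_3)\ge 2$ and $d_H(w,z_4)\ge 3$, so in $K_1+H$ we have $d(w,z_3)=d(w,z_4)=2$; and trivially $d(v,z_3)=d(v,z_4)=1$. Hence no element of $B$ distinguishes $z_3$ from $z_4$, contradicting that $B$ is a metric generator. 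The only conceptual step is the opening observation that $v$ is impotent inside $V(H)$, which corners $W$ into a single closed neighborhood; after that, both cases are short triangle-inequality bookkeeping, and I do not anticipate a serious obstacle beyond this initial setup.
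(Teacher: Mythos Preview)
Your proof is correct and follows essentially the same route as the paper: both arguments hinge on the observation that if the apex $v$ lies in a metric basis $B$, then $B\setminus\{v\}$ must be contained in $N_H(q)$ for some $q\in V(H)$, after which the degree bound is immediate and the radius case is handled by exhibiting two vertices at $H$-distance $\ge 3$ from $q$ that become indistinguishable. The paper states the key inclusion as a one-line ``note'' whereas you justify it explicitly via the uselessness of $v$ for pairs inside $V(H)$, but the substance is identical.
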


\begin{proof}
Let $B$ be a metric basis of   $K_1+H$. We suppose that the vertex $v$ of $K_1$  belongs to $B$. Note that $v\in B$ if and only if there exists $u\in V(H)-B$ such that $B\subset N(u)$.

Now, if $r(H)\ge 4$, then we take  $u'\in V(H)$ such that $d_H(u,u')=4$   and  a shortest path $uu_1u_2u_3u'$. In such a case we have that $d_{K_1+H}(b,u_3)=d_{K_1+H}(b,u')=2$, for every $b\in B-\{v\}$, which is a contradiction. Hence, $v$ does not belong to any metric basis of   $K_1+H$. On the other hand, if $|B|>\Delta(H)+1$, then $v\not \in B$.
\end{proof}

The converse of Lemma \ref{Lemmak1belons} is not  true. In Figure \ref{K1+H} we show a graph $H$ of radius three where $\dim(K_1+H)=4<5=\Delta(H)+1$ and the vertex of $K_1$ does not belong to any metric basis of   $K_1+H$.

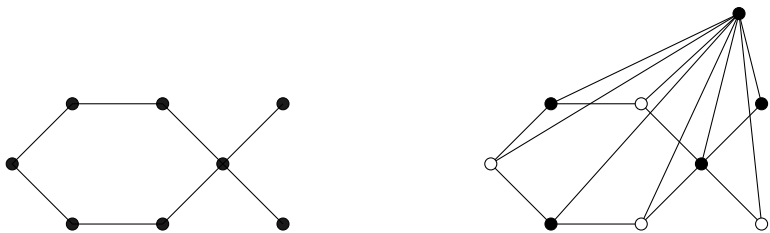
\begin{figure}[h]
\centering
\begin{tabular}{cccccc}
\begin{tikzpicture}
\draw(0,0) -- (0.8,0.8) -- (2,0.8) -- (2.8,0) -- (2,-0.8) -- (0.8,-0.8) -- cycle;
\draw(2.8,0) -- (3.6,0.8);
\draw(2.8,0) -- (3.6,-0.8);

\filldraw[fill opacity=0.9,fill=black]  (0,0) circle (0.08cm);
\filldraw[fill opacity=0.9,fill=black]  (0.8,0.8) circle (0.08cm);
\filldraw[fill opacity=0.9,fill=black]  (2,0.8) circle (0.08cm);
\filldraw[fill opacity=0.9,fill=black]  (2.8,0) circle (0.08cm);
\filldraw[fill opacity=0.9,fill=black]  (2,-0.8) circle (0.08cm);
\filldraw[fill opacity=0.9,fill=black]  (0.8,-0.8) circle (0.08cm);
\filldraw[fill opacity=0.9,fill=black]  (3.6,0.8) circle (0.08cm);
\filldraw[fill opacity=0.9,fill=black]  (3.6,-0.8) circle (0.08cm);

\end{tikzpicture} & & \hspace*{0.9cm} & &
\begin{tikzpicture}
\draw(0,0) -- (0.8,0.8) -- (2,0.8) -- (2.8,0) -- (2,-0.8) -- (0.8,-0.8) -- cycle;
\draw(2.8,0) -- (3.6,0.8);
\draw(2.8,0) -- (3.6,-0.8);

\draw(3.3,2) -- (0,0);
\draw(3.3,2) -- (0.8,0.8);
\draw(3.3,2) -- (2,0.8);
\draw(3.3,2) -- (2.8,0);
\draw(3.3,2) -- (2,-0.8);
\draw(3.3,2) -- (0.8,-0.8);
\draw(3.3,2) -- (3.6,0.8);
\draw(3.3,2) -- (3.6,-0.8);

\filldraw[draw=black,fill=white]  (0,0) circle (0.08cm);
\filldraw[fill=black]  (0.8,0.8) circle (0.08cm);
\filldraw[draw=black,fill=white]  (2,0.8) circle (0.08cm);
\filldraw[fill=black]  (2.8,0) circle (0.08cm);
\filldraw[draw=black,fill=white]  (2,-0.8) circle (0.08cm);
\filldraw[fill=black]  (0.8,-0.8) circle (0.08cm);
\filldraw[fill=black]  (3.6,0.8) circle (0.08cm);
\filldraw[draw=black,fill=white]  (3.6,-0.8) circle (0.08cm);
\filldraw[fill=black]  (3.3,2) circle (0.08cm);

\end{tikzpicture} \\
\end{tabular}
\caption{A graph $H$ and the join graph $K_1+H$. White vertices form a metric basis of   $K_1+H$.}
\label{K1+H}
\end{figure}

 Remark \ref{MeinResultCorona} and Lemma \ref{Lemmak1belons}  lead to the next result.

\begin{proposition}
Let $G$ be a connected graph of order $n$ and let $H$ be a graph of radius $r(H)$ and maximum degree $\Delta(H)$. If $r(H)\ge 4$ or $\dim(K_1+H)>\Delta(H)+1$, then
$$ \dim(G\odot H)= n\cdot \dim(K_1+H).$$
\end{proposition}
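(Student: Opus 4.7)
The plan is to combine the two results immediately preceding the statement, essentially as a one-line deduction. First, I would invoke Lemma \ref{Lemmak1belons}: under either of the stated hypotheses ($r(H)\ge 4$ or $\dim(K_1+H)>\Delta(H)+1$), the vertex of $K_1$ fails to lie in any metric basis of $K_1+H$. This is precisely the conclusion of that lemma, so no extra argument is needed at this step.

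Next, I would apply Corollary \ref{CorollaryResultCorrect} (which itself specializes Remark \ref{MeinResultCorona} to the case where all $H_i$ are isomorphic copies of a single graph $H$). Since the first branch of the piecewise definition is ruled out by the conclusion of the previous paragraph, the ``otherwise'' branch applies and yields
\[
\dim(G\odot H)=n\cdot \dim(K_1+H).
\]

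Equivalently, one may argue directly from Remark \ref{MeinResultCorona}: with $\mathcal{H}=\{H,\dots,H\}$, the hypothesis of Lemma \ref{Lemmak1belons} forces every $H_i$ into the family $\mathcal{H}_1$ and leaves $\mathcal{H}_2=\emptyset$, so the sum collapses to $n\cdot\dim(K_1+H)$. I do not expect any obstacle in this proof, since all the substantive work is already carried out in Remark \ref{MeinResultCorona}, Corollary \ref{CorollaryResultCorrect}, and Lemma \ref{Lemmak1belons}; the present proposition is merely the syntactic combination of these facts.
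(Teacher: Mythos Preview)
Your proposal is correct and matches the paper's own justification: the paper simply states that the result follows from Remark \ref{MeinResultCorona} and Lemma \ref{Lemmak1belons}, which is exactly the combination you describe (your use of Corollary \ref{CorollaryResultCorrect} is just the specialization of Remark \ref{MeinResultCorona} to identical copies of $H$).
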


It was shown in \cite{Yero2011} that any for corona graph $G=G_1\odot G_2$, such that $G_1$ is connected and both $G_1$ and $G_2$ are non-null graphs, it follows that the vertices of $G_1$ do not belong to any metric basis of $G$. Moreover, if $G_1$ has order $n_1\ge 2$ and  $H$ has diameter $D(G_1)\le 2$, then $\dim(G_1\odot G_2)=n_1\cdot \dim(G_2)$, \cite{Yero2011}.

Therefore, as a consequence of Corollary \ref{CorollaryResultCorrect} we obtain the following result.

\begin{remark}
Let $G$ and $G_1$ be two connected graphs of order $n$ and $n_1\ge 2$, respectively. Then for any $v\in V(G_1)$ and any graph $G_2$ of diameter one or two,
$$\dim(G\circ_v (G_1\odot G_2))=nn_1\cdot \dim(G_2).$$
\end{remark}

\section{Chain of graphs}

Let $G_1,G_2,... ,G_k$ be a finite sequence of pairwise disjoint (nontrivial) connected graphs and let $x_i,y_i\in V(G_i)$. A \emph{chain} $G$ is a graph obtained by point-attaching from $G_1,G_2,... ,G_k$ where the vertex $y_i$ is identified with the vertex $x_{i+1}$ for $i\in \{1,... ,k-1\}$.

\begin{figure}[h]
\centering
\begin{tikzpicture}
\draw[black] (-3,0)--(-2,0)--(-1,0)--(0,-1)--(1,0)--(3,0)--(4,-1)--(4,1)--(3,0);
\draw[black] (1,0)--(2,1)--(3,0)--(1,0);
\draw[black] (-1,0)--(0,1)--(1,0);
\draw[black] (-2,-1)--(-2,0)--(-2,1);

\coordinate [label={$a$}] () at (-1,0.1);
\coordinate [label={$b$}] () at (1,0.1);
\coordinate [label={$c$}] () at (3,0.1);

\filldraw[fill=black]  (0,-1) circle (0.08cm);
\filldraw[fill=black]   (0,1) circle (0.08cm);
\filldraw[fill=white]   (-1,0) circle (0.08cm);
\filldraw[fill=white]   (1,0) circle (0.08cm);
\filldraw[fill=white]   (3,0) circle (0.08cm);
\filldraw[fill=black]  (2,1) circle (0.08cm);
\filldraw[fill=black]   (4,-1) circle (0.08cm);
\filldraw[fill=black]   (4,1) circle (0.08cm);
\filldraw[fill=black]   (-2,0) circle (0.08cm);
\filldraw[fill=black]   (-2,1) circle (0.08cm);
\filldraw[fill=black]   (-2,-1) circle (0.08cm);
\filldraw[fill=black]   (-3,0) circle (0.08cm);
\end{tikzpicture}
\caption{A chain graph where $G_1\cong K_{1,4}$, $G_2\cong C_{4}$, $G_3\cong K_{3} \cong G_4$, $A(G_1)=\{a\}$, $A(G_2)=\{a,b\}$, $A(G_3)=\{b,c\}$ and  $A(G_4)=\{c\}$.}\label{FigureChain}
\end{figure}
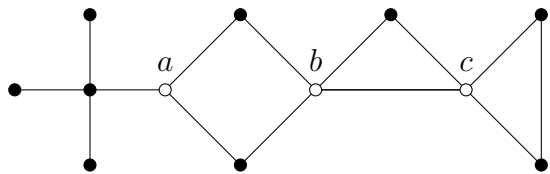

From Theorem \ref{th equal} we deduce our next result.

\begin{corollary}
Let $G$ be a chain obtained by point-attaching from a family of connected graphs $G_1,... ,G_k$, $k\ge 3$, such that $G_1$ and $G_k$ satisfy ${\cal P}_2$. If the attachment vertices of the primary subgraphs $G_i$ are diametral in $G_i$, for $i\in\{2,..., k-1\}$, then
$$\dim(G)=\sum_{i=1}^k \dim^*(G_i).$$
\end{corollary}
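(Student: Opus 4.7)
The strategy is to verify that the three hypotheses of Theorem \ref{th equal} hold for a chain satisfying the stated conditions, and then invoke that theorem directly.

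First I would identify the role of each primary subgraph. In a chain, $G_1$ has only $y_1$ identified with a vertex of $G_2$ and $G_k$ has only $x_k$ identified with a vertex of $G_{k-1}$, so $A(G_1)=\{y_1\}$ and $A(G_k)=\{x_k\}$, which makes $G_1$ and $G_k$ the primary end-subgraphs. For every $i\in\{2,\dots,k-1\}$, the vertex $x_i$ is identified with $y_{i-1}$ and the vertex $y_i$ is identified with $x_{i+1}$, so $A(G_i)=\{x_i,y_i\}$ and $G_i$ is a primary internal subgraph.

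Next I would check the three hypotheses one by one. The assumption that $G_1$ and $G_k$ satisfy ${\cal P}_2$ is part of the hypothesis. Disjointness of $A(G_1)$ and $A(G_k)$ is automatic: since the primary subgraphs $G_1,\dots,G_k$ are pairwise disjoint before point-attaching and in the chain only the pairs $(y_i,x_{i+1})$ are identified, and $k\ge 3$ rules out any identification between $V(G_1)$ and $V(G_k)$, so $V(G_1)\cap V(G_k)=\emptyset$ and therefore $A(G_1)\cap A(G_k)=\emptyset$. It remains to verify property ${\cal P}_1$ for each internal subgraph $G_i$, and this is where the diametral hypothesis enters. Given $a\in A(G_i)=\{x_i,y_i\}$ and $z\in V(G_i)-A(G_i)$, choose $b$ to be the other attachment vertex of $G_i$. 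Then
\[
d_{G_i}(a,b)=d_{G_i}(x_i,y_i)=D(G_i)\ge d_{G_i}(z,b),
\]
since $D(G_i)$ is by definition the maximum distance in $G_i$. Hence ${\cal P}_1$ holds for every primary internal subgraph.

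Having verified all the hypotheses, the conclusion $\dim(G)=\sum_{i=1}^{k}\dim^*(G_i)$ follows immediately from Theorem \ref{th equal}. The only step that is not entirely cosmetic is the ${\cal P}_1$ verification, and even there the diametral assumption on the attachment vertices is exactly what is needed to make the argument go through; no further obstacle is expected.
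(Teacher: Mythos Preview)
Your proof is correct and follows exactly the approach intended by the paper, which simply states that the corollary is deduced from Theorem~\ref{th equal}. You have correctly identified the end- and internal subgraphs of the chain, verified ${\cal P}_2$ and the disjointness condition for the two end-subgraphs, and used the diametral hypothesis to check ${\cal P}_1$ for each internal $G_i$ (this is precisely the case $\epsilon_{G_i}(x)=\epsilon_{G_i}(y)=d_{G_i}(x,y)$ listed after the definition of ${\cal P}_1$).
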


For instance, for the chain graph $G$ shown in Figure \ref{FigureChain} we have $\dim(G)=4$, as $\dim^*(G_1)=2$, $\dim^*(G_2)=1$, $\dim^*(G_3)=0$ and $\dim^*(G_4)=1$.

\end{document}